\newtheorem{theorem}{Theorem}[section]
\newtheorem{lemma}[theorem]{Lemma}
\newtheorem{proposition}[theorem]{Proposition}
\theoremstyle{definition}
\newtheorem{definition}[theorem]{Definition}
\newtheorem{rema}[theorem]{Remark}
\newtheorem{coro}[theorem]{Corollary}
\begin{document}
\newcommand{\nc}{\newcommand}
\nc{\rnc}{\renewcommand} \nc{\nt}{\newtheorem}


\nc{\TitleAuthor}[2]{\nc{\Tt}{#1}%
    \nc{\At}{#2}%
    \maketitle%
}


\nc{\Hom}{\operatorname{Hom}} \nc{\Mor}{\operatorname{Mor}} \nc{\Aut}{\operatorname{Aut}}
\nc{\Ann}{\operatorname{Ann}} \nc{\Ker}{\operatorname{Ker}} \nc{\Trace}{\operatorname{Trace}}
\nc{\Char}{\operatorname{Char}} \nc{\Mod}{\operatorname{Mod}} \nc{\End}{\operatorname{End}}
\nc{\Spec}{\operatorname{Spec}} \nc{\Span}{\operatorname{Span}} \nc{\sgn}{\operatorname{sgn}}
\nc{\Id}{\operatorname{Id}} \nc{\Com}{\operatorname{Com}}

\nc{\nd}{\mbox{$\not|$}} 
\nc{\nci}{\mbox{$\not\subseteq$}}
\nc{\scontainin}{\mbox{$\mbox{}\subseteq\hspace{-1.5ex}\raisebox{-.5ex}{$_\prime$}\hspace*{1.5ex}$}}


\nc{\R}{{\sf R\hspace*{-0.9ex}\rule{0.15ex}%
    {1.5ex}\hspace*{0.9ex}}}
\nc{\N}{{\sf N\hspace*{-1.0ex}\rule{0.15ex}%
    {1.3ex}\hspace*{1.0ex}}}
\nc{\Q}{{\sf Q\hspace*{-1.1ex}\rule{0.15ex}%
       {1.5ex}\hspace*{1.1ex}}}
\nc{\C}{{\sf C\hspace*{-0.9ex}\rule{0.15ex}%
    {1.3ex}\hspace*{0.9ex}}}
\nc{\Z}{\mbox{${\sf Z}\!\!{\sf Z}$}}


\newcommand{\gd}{\delta}
\newcommand{\sub}{\subset}
\newcommand{\cntd}{\subseteq}
\newcommand{\go}{\omega}
\newcommand{\Pa}{P_{a^\nu,1}(U)}
\newcommand{\fx}{f(x)}
\newcommand{\fy}{f(y)}
\newcommand{\gD}{\Delta}
\newcommand{\gl}{\lambda}
\newcommand{\half}{\frac{1}{2}}
\newcommand{\ga}{\alpha}
\newcommand{\gb}{\beta}
\newcommand{\gga}{\gamma}
\newcommand{\ul}{\underline}
\newcommand{\ol}{\overline}
\newcommand{\Lrraro}{\Longrightarrow}
\newcommand{\equi}{\Longleftrightarrow}
\newcommand{\gt}{\theta}
\newcommand{\op}{\oplus}
\newcommand{\Op}{\bigoplus}
\newcommand{\CR}{{\cal R}}
\newcommand{\tr}{\bigtriangleup}
\newcommand{\grr}{\omega_1}
\newcommand{\ben}{\begin{enumerate}}
\newcommand{\een}{\end{enumerate}}
\newcommand{\ndiv}{\not\mid}
\newcommand{\bab}{\bowtie}
\newcommand{\hal}{\leftharpoonup}
\newcommand{\har}{\rightharpoonup}
\newcommand{\ot}{\otimes}
\newcommand{\OT}{\bigotimes}
\newcommand{\bwe}{\bigwedge}
\newcommand{\gep}{\varepsilon}
\newcommand{\gs}{\sigma}
\newcommand{\OO}{_{(1)}}
\newcommand{\TT}{_{(2)}}
\newcommand{\FF}{_{(3)}}
\newcommand{\minus}{^{-1}}
\newcommand{\CV}{\cal V}
\newcommand{\CVs}{\cal{V}_s}
\newcommand{\slp}{U_q(sl_2)'}
\newcommand{\olp}{O_q(SL_2)'}
\newcommand{\slq}{U_q(sl_n)}
\newcommand{\olq}{O_q(SL_n)}
\newcommand{\un}{U_q(sl_n)'}
\newcommand{\on}{O_q(SL_n)'}
\newcommand{\ct}{\centerline}
\newcommand{\bs}{\bigskip}
\newcommand{\qua}{\rm quasitriangular}
\newcommand{\ms}{\medskip}
\newcommand{\noin}{\noindent}
\newcommand{\raro}{\rightarrow}
\newcommand{\alg}{{\rm Alg}}
\newcommand{\rcom}{{\cal M}^H}
\newcommand{\lcom}{\,^H{\cal M}}
\newcommand{\rmod}{\,_R{\cal M}}
\newcommand{\qtilde}{{\tilde Q^n_{\gs}}}
\nc{\e}{\overline{E}} \nc{\K}{\overline{K}} \nc{\gL}{\Lambda}
\newcommand{\tie}{\bowtie}
\newcommand {\h}{\widehat}
\newcommand {\tl}{\tilde}
\newcommand{\tri}{\triangleright}
\nc{\ad}{_{\dot{ad}}}
\nc{\coad}{_{\dot{{\rm coad}}}}
\nc{\ov}{\overline}

\title{Character tables and normal left coideal subalgebras}
\author{Miriam Cohen}
\address {Department of Mathematics\\
Ben Gurion University, Beer Sheva, Israel}
\email {mia@math.bgu.ac.il}
\author{Sara Westreich}
\address{Department of Management\\
Bar-Ilan University,  Ramat-Gan, Israel}
\email{swestric@biu.ac.il}
\thanks {This research was supported by the ISRAEL SCIENCE FOUNDATION, 170-12.}

\subjclass[2000]{16T05}

\date{Apr-2013}

\begin{abstract}
We continue studying properties of semisimple Hopf algebras $H$ over algebraically closed fields of characteristic $0$ resulting from their generalized character tables. We show that  the generalized character table of $H$ reflect normal left coideal subalgebras of $H.$ These  are the Hopf analogues of normal subgroups in the sense that they arise from Hopf quotients.  We apply these ideas to prove Hopf analogues of known results in group theory. Among the rest  we prove  that columns of the character table are orthogonal and that all entries are algebraic integers. We analyze `semi kernels' and their relations to the character table. We prove a full analogue of the Burnside-Brauer theorem for almost cocommutative $H.$ We also prove the Hopf algebras  analogue of the following (Burnside) theorem:
If G is a non-abelian simple group then $\{1\}$ is the only conjugacy class of G which has prime power order.
\end{abstract}
\maketitle
{\footnotesize{We dedicate this paper to the late Bettina Zoeller-Richmond whose joint seminal work with Nichols is at the basis of many of the  results in this paper.}}
\section*{introduction}

The basic philosophy behind this paper is   that the analogue of normal subgroups for Hopf algebras are normal left coideal subalgebras, $N,$ rather than the more naive notion of normal Hopf subalgebras.
Such $N$ occur whenever one deals with Hopf quotients.
Specific examples  are the so called left kernels of representations of $H,$ which are analogues of kernels of representations of a group $G,$ (an idea introduced by Burciou). We also introduce semi-kernels, a Hopf algebra analogue of certain normal subgroups of  $G$ which contain such kernels.
In a different direction, trivial for groups, examples of  normal left coideal subalgebras are images of the Drinfeld map for quasitriangular Hopf algebras.

It turns out that there is an intrinsict connection between character tables for semisimple Hopf algebras $H$ over a field of characteristic $0$ and certain normal left coideal subagebras of $H.$  Unlike character tables for groups the generalized character table need not be a square matrix.

\medskip
We apply these ideas to prove among the rest the following (Burnside) theorem:
If G is a non-abelian simple group then $\{1\}$ is the only conjugacy class of G which has prime power order.

\bigskip The paper is organized as follows: In the preliminaries we define conjugacy classes ${\mathfrak C}_j,$ normalized class sums $\eta_j$ and  a generalized character table $(\xi_{ij})$ as follows:

Let $R(H)$ denote the character algebra of $H$ and let $\{\frac{1}{d}\gl=F_0,\dots F_{m-1}\}$ be a complete set of central primitive idempotents of $R(H).$ For each $j,$ let $\{f_{ij}\},\,1\le i\le m_j$ be a full set of primitive orthogonal idempotents of the algebra $F_jR(H).$  Then  $f_{ij}R(H)$ are all isomorphic as right $R(H)$-modules of dimension $m_j.$
The {\bf conjugacy class} ${\mathfrak C}_{ij}$ is defined as:
$${\mathfrak C}_{ij}=\gL\leftharpoonup f_{ij}H^*.$$
For each $j$ choose  a representative for each conjugacy class by taking arbitrarily $i,$ and defining:
  $$f_j=f_{ij}\qquad{\mathfrak C}_j={\mathfrak C}_{ij}.$$

Define also
 $${\mathfrak C}^j= \gL \leftharpoonup F_jH^*=\bigoplus_i {\mathfrak C}_{ij}.$$

We generalize also the notions of  {\bf Class sum}  and of a representative of a conjugacy class as follows:
$$ C_j=\gL\leftharpoonup dF_j,\qquad \eta_j=\frac{1}{\dim (f_jH^*)}C_j.$$
We refer to $\eta_j$ as a  {\bf normalized class sum}.

 \medskip Let $\{\chi_0,\dots,\chi_{n-1}\}$ be the set of all irreducible characters of $H.$ The generalized character table $(\xi_{ij})$ of a semisimple Hopf algebra $H$ over $k$ is given by:
$$\xi_{ij}=\left\langle  \chi_i,\eta_j\right\rangle  ,$$
$0\le i\le n-1,\;0\le j\le m-1.$

\bigskip In \S $2$ we prove that normalized class sums $\eta_j$ are in fact irreducible characters of $R(H).$

\medskip\noin{\bf Theorem \ref{muf}:}  Let $H$ be a semisimple Hopf algebra over $k,\,\{\eta_j\}$  the  normalized class sums of $H.$  Then the irreducible character $\mu_j$ of $R(H)$ corresponding to the irreducible $R(H)$-module $f_jR(H)$ can be identified inside $Z(H)$ as $\eta_j.$

\medskip We use this to prove orthogonality of columns of the generalized character table.

\medskip\noin{\bf Theorem \ref{ortho1}:} Let $H$ be a $d$-dimsnsional semisimple Hopf algebra over $\C,$ then
$$\sum_k\xi_{ki}\ol{\xi_{kj}}=\gd_{ij}\frac{d\dim(f_iR(H))}{\dim(f_iH^*)}.$$

\medskip Another property of the generalized character table, which is known for groups and is of essential importance in proving later the existence of various normal left coideal subalgebras, is the following:

\medskip\noin{\bf Theorem \ref{maxgeneral}:}
Let $H$ be a semisimple Hopf algebra over $\C$ with a character table $(\xi_{ij})$ where $\xi_{ij}=\langle\chi_i,\eta_j\rangle $ and   $f_jR(H)$  an irreducible right $R(H)$ module of dimension $m_j.$  Then

\noin{\rm (i)} Each entry of the $i$-th row of the character table satisfies:
$$|\langle\chi_i,\eta_j\rangle  |\le m_j\langle\chi_i,1\rangle .$$

\noin{\rm (ii)} Equality holds if and only if right multiplication by $\chi_i,\;r_{\chi_i}$ acts on $f_jR(H)$ as $\ga_i\Id_{f_jR(H)},$ where $|\ga_i|=\langle \chi_i,1\rangle.$

\noin{\rm (iii)}  $\langle\chi_i,\eta_j\rangle = m_j\langle\chi_i,1\rangle $ if and only if $r_{\chi_i}$ acts on $f_jR(H)$ as $\langle\chi_i,1\rangle \Id_{f_jR(H)}.$

\medskip

In \S 3 we extend the definition of left kernels to $\go$-left kernels, $\go\in\C,$ as follows:
$${\rm LKer}^{\go}_V=\{h\in H|\sum h_1\ot h_2\cdot v=\go h\ot v\quad\forall v\in V,\;|\go|=1\}.$$

Then ${\rm LKer}^{\go}_V$ is a left coideal stable under the adjoint action of $H$ and $\sum_{\go}\bigoplus {\rm LKer}^{\go}_V$ is a graded normal left coideal subalgebra of $H.$ For groups this boils down to the normal subgroup of $G$ sometimes denoted by $Z(\chi_V)$ (\cite[Def 2.26]{is}).

\medskip We  prove essential relations between the character table  and $\go$-left kernels by using an equivalence relation on characters introduced by \cite[Prop. 18]{nr2}.

\medskip\noin {\bf Theorem \ref{connection3}:}
Let $H$ be a semisimple Hopf algebra over an algebraicly closed field $k$ of characteristic $0,\;\chi_i$ a character associated with the irreducible representation $V_i,$ and $F_j$ a central primitive idempotent of $R(H).$ Then the following are equivalent:

\noin{\rm (i)} $F_j\chi_i=\go_i\langle \chi_i,1\rangle  F_j,\,|\go_i|=1.$

\noin{\rm (ii)} $\langle \chi_i,\eta_j \rangle = \go_im_j\langle \chi_i,1 \rangle,\,|\go_i|=1 .$

\noin{\rm (iii)} ${\mathfrak C}^{j}\subset {\rm LKer}^{\go_i}_{V_i}.$

Moreover, if the equivalent conditions hold then $\go_i$ is a root of unity of order $t,$ where $t$ divides $\dim H.$

\medskip In particular, when $\go=1$ we relate normal left coideal subalgebras which are left kernels to the character table:

\medskip\noin {\bf Theorem \ref{connection}:}
Let $H$ be a semisimple Hopf algebra over an algebraicly closed field $k$ of characteristic $0,\;B$ the Hopf subalgebra of $H^*$ generated by $\chi_i,$ and $F_j$ a central primitive idempotent of $R(H).$ Then the following are equivalent:

\noin{\rm (i)} $F_j\chi_i=\left\langle \chi_i,1 \right\rangle F_j$

\noin{\rm (ii)}  $\left\langle \chi_i,\eta_j \right\rangle =m_j\left\langle \chi_i,1 \right\rangle .$

\noin{\rm (iii)} ${\mathfrak C}^{j}\subset {\rm LKer}_{V_i}.$

\medskip

Just as for groups we say that $\chi_V$ is a faithful character if ${\rm LKer}_V=k1.$   If the character algebra $R(H)$ is assumed to be commutative we get a complete analogue of the  Burnside-Brauer theorem as follows. Define the value set
$$\chi(H)=\{\langle  \chi,\eta_j\rangle  \,|\;\text{all }\eta_j\}.$$
Then:

\medskip\noin {\bf Theorem \ref{burnside}:}
 Let $H$ be a semisimple Hopf algebra such that $R(H)$ is commutative and let $\chi$ be a faithful character of $H.$ Set $$t= \text{number of distinct values in }\, \chi(H).$$
 Then each character $\mu$ of $H$ appears with positive multiplicity in at least one of $\{\gep,\chi,\chi^2,\dots,\chi^{t-1}\}.$

\bigskip In \S 4 we apply the results of the previous sections with known results for groups. In particular we  prove a theorem of Burnside for groups in our context.

\medskip\noin{\bf Theorem \ref{isa}:}
Let $H$ be a non-commutative quasitriangular semisimple Hopf algebra over $\C$ whose only normal left coideal subalgebras are $\C$ and $H.$ Then $\C$ is the only  conjugacy class of $H$ with  a prime power dimension.

\medskip This theorem is an essential step in the proof of Burnside $p^aq^b$ theorem for groups, which was generalized to fusion categories in \cite{eno}. We prove here an interesting property of normal left coideal subalgebras of factorizable Hopf algebras of dimension $p^aq^b.$

\medskip\noin{\bf Theorem \ref{hopfsub}:}
Let $H$ is a factorizable semisimple  Hopf algebra over $\C$  of dimension $p^aq^b,\,a+b>0,$ and let $N$ be a  normal left  coidal subalgebra of $H.$ Then $N$ contains a central grouplike element. In particular, any minimal normal left  coidal subalgebra of $H$ is a central Hopf subalgebra of $H$ of  prime order.

\section{Preliminaries} Throughout this paper,  $H$ is a finite-dimensional Hopf algebra over a field $k,$ in some cases we assume $k=\C.$  We denote by $S$ and $s$ the antipodes of $H$ and $H^*$ respectively and $\gL$ and $\gl$ the left and right integrals of $H$ and $H^*$ respectively so that $\langle  \gl,\gL \rangle   = 1.$
We  say that $H$ is unimodular if $\gL$    is also a right integral (this is the case when $H$ is semisimple). Denote by
 $Z(H)$ the center of $H.$

 Recall that any subbialgebra of $H$ is necessarily a Hopf subalgebra.

The Hopf algebra $H^*$ becomes a right and left $H$-module by the {\it hit} actions $\leftharpoonup$ and $\rightharpoonup$ defined for all $a\in H,\,p\in H^*,$
$$\langle  p\leftharpoonup a,a'\rangle  =\langle  p,aa'\rangle  \qquad \langle  a\rightharpoonup p,a'\rangle  =\langle  p,a'a\rangle  $$
$H$ becomes a left and right  $H^*$ module analogously.

Denote by $_{\dot{ad}} $ the left adjoint action of $H$ on itself, that is, for all $a,h\in H,$
$$h_{\dot{ad}}  a=\sum h_1aS(h_2)$$

A left coideal subalgebra of $H$ is called {\it normal} if it is stable under the left adjoint action of $H.$

Let $D(H)$ denote the Drinfeld double of the Hopf algebra $H.$ It is not hard to see that $H$ is a $D(H)$-module with respect to the left adjoint action of $H$ on itself and  the left    action $\rightharpoondown$ of $H^*$ on $H,$ defined by
$$p\rightharpoondown h=h\leftharpoonup s\minus(p).$$

\medskip
Denote by Coc$(H^*)$ the algebra of cocommutative elements of $H^*$ which equals the set of all $p \in H^*$ so that $\langle  p , ab\rangle   = \langle  p, ba\rangle  $ for all $a,b \in H.$ Denote by $R(H)$ the $k$-span of all irreducible characters.  It is an algebra (called the character algebra) which is contained in Coc$(H^*)$. A necessary and sufficient for them to equal is that $H$ is semisimple.

\medskip Recall \cite{sk}, any left coideal subalgebra $A$ of $H$  contains a left integral $\gL_A.$ Moreover, if $A_1\subset A_2$ are left coideal subalgebras then $A_2$ is free over $A_1.$  This implies in particular that:
\begin{rema}\label{sk1} If $A\ne B$ are left coideal subalgebras of $H$ then $\gL_A\ne\gL_B.$ If $H$ is semisimple, then $A$ is semisimple and $\langle  \gL_A,1\rangle  \ne 0.$ \end{rema}



\medskip

Let $V$ be a finite dimensional left $H$-module  with a character $\chi_V$ and let  $R_V$ be  the coalgebra in $H^*$ generated by $\chi_V.$ Then $R_V$ can be described  explicitly as follows: Let $\gd_V$ be the corresponding right $H^*$-comodule structure map on $V,$ then for a basis  $\{v_1\dots, v_n\}$ of $V,$ we have $\gd_V(v_i)=\sum_k v_k\ot x_{ki},$ and $\chi_V=\sum x_{ii}.$ It follows that
\begin{equation}\label{hv} \gd_V(V)\subset V\ot R_V.\end{equation}
Note that  $R_V$ contains all the irreducible constituent of $\chi_V,$ and thus $V$ is irreducible if and only if $R_V$ is a simple coalgebra.
Note also that for any finite dimensional left $H$-modules $V,\,W,$
$$R_VR_W=R_{V\ot W}.$$

Let $H$ be a semisimple Hopf algebra  $\{V_0,\dots V_{n-1}\}$  a complete set of non-isomorphic irreducible $H$-modules, $V_0=k,$ and  $\{E_0,\dots E_{n-1}\}$ and $\{\chi_0,\dots\chi_{n-1}\}$ be the associated central primitive idempotents and  irreducible characters of $H$ respectively, where $E_0=\gL$ and $\chi_0=\gep.$  Let $\dim V_i=d_i=\langle  \chi_i,1\rangle  ,$ then
$\gl=\chi_H=\sum_{i=0}^{n-1}d_i\chi_i.$ One has (see e.g \cite[Cor.4.6]{sc}):
 \begin{equation}\label{dual1}\langle  \chi_i,E_j\rangle  =\gd_{ij}d_j,\qquad \gL\leftharpoonup\chi_j=\frac{1}{d_j}S(E_j).\end{equation}
In particular, $\{\chi_i\},\,\{\frac{1}{d_j}E_j\}$ are dual bases of $R(H)$  and $Z(H)$ respectively.

Since $H$ is semisimple, we have:
$$\chi_i\chi_j=\sum_lm_{ij}^l\chi_l,$$
where $m_{ij}^l$ are non-negative integers.

For a semisimple Hopf algebra over an algebraically closed field $k$ of characteristic $0$, let  Irr$(H)$ be  the set of irreducible characters of $H.$  Then  any simple subcoalgebra $B_i$ of $H^*$ contains precisely one character that generates $B_i$ as a coalgebra. Since $B=\bigoplus_{i\in I}B_i,$ where each $B_i$ is a simple subcoalgebra of $H^*,$ it follows that $B$ is the coalgebra generated by $B\cap {\rm Irr}(H).$ Also, if $\chi\in B$ then all its irreducible constituents belong to $B$ as well.

In particular, if $B$ is a Hopf subalgeba of $H^*$  then
$$\gl_B=\sum_{\chi_i\in {\rm Irr}(H)\cap B}d_i\chi_i$$
is a nonzero integral for $B.$

\medskip
It is known (\cite{k,z}) that when $H$ is semisimple then so is $R(H).$ Let $\{\frac{1}{d}\gl=F_0,\dots F_{m-1}\}$ be a complete set of central primitive idempotents of $R(H).$ For each $j,$ let $\{f_{ij}\},\,1\le i\le m_j$ be a full set of primitive orthogonal idempotents of the algebra $F_jR(H).$  Then  $f_{ij}R(H)$ are all isomorphic as right $R(H)$-modules of dimension $m_j.$

\medskip Motivated by group theory  the {\bf conjugacy class} ${\mathfrak C}_{ij}$ as:
\begin{equation}\label{cci}{\mathfrak C}_{ij}=\gL\leftharpoonup f_{ij}H^*.\end{equation}
Then we have shown in \cite[Th.1.4]{cw4} that:

\medskip

   {\it ${\mathfrak C}_{ij}$ is an irreducible $D(H)$-module.}

  \medskip
  Moreover, ${\mathfrak C}_{ij}\cong {\mathfrak C}_{tj}$ as left $D(H)$-modules for all $1\le i,t\le m_j.$ In particular, they have the same dimension, which equals $\dim(f_{ij}H^*)$. Choose arbitrarily $i,$ and set
\begin{equation}\label{fj0}f_j=f_{ij}\qquad{\mathfrak C}_j={\mathfrak C}_{ij}.\end{equation}
  We have shown in \cite[Th.1.4]{cw4} that,
\begin{equation}\label{nik} H\cong \oplus_{j=0}^{m-1} {\mathfrak C}_j^{\oplus m_j}\end{equation}
as $D(H)$-modules.

\medskip We generalize also the notions of  {\bf Class sum}  and of a representative of a conjugacy class as follows:
\begin{equation}\label{ci} C_j=\gL\leftharpoonup dF_j,\qquad \eta_j=\frac{1}{\dim (f_jH^*)}C_j.\end{equation}
We refer to $\eta_j$ as a  {\bf normalized class sum}.

By general ring theory considerations (see e.g. \cite[\S 3]{lo}), $\dim f_jH^*$  does not depend on the choice of the primitive orthogonal idempotent $f_j$ belonging to $F_j,$ hence neither does $\eta_j.$ Note that contrary to the group situation, one does not always have $\eta_j\in {\mathfrak C}_j,$ however, $\eta_j\in \bigoplus_{i=1}^{m_i}{\mathfrak C}_{ij}.$

\medskip Since $f_0=\frac{1}{d}\gl,$ it follows that $f_0=F_0,\,m_0=1,\,\dim f_0H^*=1$ and $\eta_0=1.$

 \medskip  When $R(H)$ is commutative then the central primitive idempotents of $R(H),\,\{F_j\}$ are also minimal idempotents, thus $F_j=f_j$  and they form another basis for $R(H).$ Moreover, for any character $\chi$ and primitive idempotent $F_j,$
\begin{equation}\label{almost}  \chi F_j=\langle  \chi,\eta_j\rangle   F_j.\end{equation}

\medskip We can define now a generalized character table for $H$ as follows:
\begin{definition}\label{gct}  The generalized character table $(\xi_{ij})$ of a semisimple Hopf algebra $H$ over $k$ is given by:
$$\xi_{ij}=\left\langle  \chi_i,\eta_j\right\rangle  ,$$
$0\le i\le n-1,\;0\le j\le m-1.$
\end{definition}
Note that $\xi_{i0}=\langle  \chi_i,1\rangle  =d_i$  and $\xi_{0j}=m_j=\dim f_jR(H)$ for all $0\le i \le n-1,\,0\le j\le m-1.$

\section{Some properties of the generalized character table}

Let $H$ be a semisimple Hopf algebra over $\C.$ Nichols and Richmond \cite{nr1,nr2} discussed various properties of $R(H)$ as an inner product space with involution. Their method is described as follows.  For $u=\sum\ga_i\chi_i,\;v=\sum \gb_j\chi_j,$ set
\begin{equation}\label{inner}(u,v)=\sum \ga_i\overline{\gb}_i\end{equation}
Then $(\,,\,)$ is an inner product on $R(H).$ Define an involution
$*$ on characters by
$$\chi^*=s(\chi)$$
and extend it to $R(H)$ as follows. For $u=\sum\ga_i\chi_i\in R(H)$ set
$$u^*=\sum\ol{\ga}_i\chi_i^*$$

As an algebra, $R(H)$ has its own set of characters. By \cite[Remark 11]{nr2}, for all $x\in R(H),\;\mu$ a character defined on $R(H)$ we have:
\begin{equation}\label{mu}\left\langle \mu,x^*\right\rangle =\ol{\left\langle \mu,x\right\rangle }.\end{equation}

\medskip By \eqref{dual1}, $Z(H)$ can be considered as a dual vector space to $R(H),$ but the connection is much deeper.

Recall, as a consequence of the orthogonality of characters given in \cite{la}, $R(H)$ is a symmetric algebra with a symmetric form $\gb$ defined by
$$\gb(p,q)=\left\langle \gL,pq\right\rangle ,$$ where $\gL$ is the corresponding central form  and the Casimir element is
\begin{equation}\label{casimir}\sum_{k=0}^{n-1}\chi_k\ot s(\chi_k).\end{equation}

It follows that for all $p\in R(H),$
\begin{equation}\label{casimir2}
p=\sum_k\langle  \gL,p\chi_k\rangle  s(\chi_k)\end{equation}

\medskip
For $x\in R(H),$ let $l_x (r_x)$ denote the operator of left (right) multiplication of $R(H)$ by $x.$

\begin{theorem}\label{muf}  Let $H$ be a semisimple Hopf algebra over an algebraically closed field of characteristic $0,\,\{\eta_j\}$  the  normalized class sums of $H$ as defined in \eqref{ci}. Then the irreducible character $\mu_j$ of $R(H)$ corresponding to the irreducible right $R(H)$-module $f_jR(H)$ can be identified inside $Z(H)$ as $\eta_j.$
In particular $\langle  \gep,\eta_j\rangle  =m_j.$
\end{theorem}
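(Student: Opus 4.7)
The plan is to identify $\mu_j \in R(H)^*$ with $\eta_j\in Z(H)$ under the nondegenerate pairing $R(H)\times Z(H)\to k$. Note that $\eta_j\in Z(H)$ is automatic, because $F_j\in R(H)\subset\operatorname{Coc}(H^*)$ forces $\gL\leftharpoonup F_j$ to be central in $H$. I would argue through the Wedderburn decomposition $R(H)\cong\bigoplus_l F_lR(H)$, under which $F_jR(H)\cong M_{m_j}(k)$ and $\mu_j$ is the standard matrix trace on block $j$ (zero on the other blocks).

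First I would pin down $\langle\chi,\eta_j\rangle$ up to a scalar. The symmetric form $\beta(p,q)=\langle\gL,pq\rangle$ restricts on $F_jR(H)$ to $s_j\operatorname{tr}_{M_{m_j}}$ for the constant $s_j=\langle\gL,f_j\rangle$, which is independent of the chosen primitive idempotent $f_j\in F_jR(H)$. Using that $F_j$ is central in $R(H)$ and acts as the identity on its block, one obtains $\beta(F_j,\chi)=s_j\mu_j(\chi)$ for every $\chi\in R(H)$. Rewriting $\beta(F_j,\chi)=\langle\gL,\chi F_j\rangle=\langle\chi,\gL\leftharpoonup F_j\rangle$ via the adjointness $\langle qp,\gL\rangle=\langle q,\gL\leftharpoonup p\rangle$, and combining with the definition $\eta_j=\tfrac{d}{\dim(f_jH^*)}\gL\leftharpoonup F_j$, one gets
$$\langle\chi,\eta_j\rangle=\frac{d\,s_j}{\dim(f_jH^*)}\,\mu_j(\chi).$$
The theorem therefore reduces to the identity $s_j=\dim\mathfrak{C}_j/d$.

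To establish this identity I would compute $\operatorname{Tr}_{H^*}(\ell_{f_j})$, the trace of left multiplication by $f_j$ on $H^*$, in two different ways. Since $f_j^2=f_j$, the operator $\ell_{f_j}$ is idempotent with image $f_jH^*$, so $\operatorname{Tr}_{H^*}(\ell_{f_j})=\dim(f_jH^*)=\dim\mathfrak{C}_j$. On the other hand $\operatorname{Tr}_{H^*}(\ell_{f_j})=\chi_{\mathrm{reg}}^{H^*}(f_j)$, where the regular character of $H^*$ is viewed as an element of $H^{**}=H$. For a semisimple Hopf algebra this regular character equals $d\gL$: both elements are central in $H$ and they agree on each irreducible character $\chi_i\in R(H)\subset H^*$, evaluating to $d\delta_{i,0}$ (using $\chi_i(\gL)=\delta_{i,0}$). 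Consequently $d\langle\gL,f_j\rangle=\dim\mathfrak{C}_j$, giving $s_j=\dim\mathfrak{C}_j/d$ and hence $\langle\chi,\eta_j\rangle=\mu_j(\chi)$.

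The main obstacle is the standard identification $\chi_{\mathrm{reg}}^{H^*}=d\gL$; everything else is Wedderburn bookkeeping combined with the adjointness of the hit action. The ``in particular'' assertion $\langle\epsilon,\eta_j\rangle=m_j$ is then the special case $\chi=\epsilon$, since $\mu_j(\epsilon)=\mu_j(1_{R(H)})=\dim(f_jR(H))=m_j$.
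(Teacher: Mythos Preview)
Your approach is different from the paper's and essentially works, modulo one gap. The paper computes $\langle\mu_j,x\rangle=\mathrm{Trace}(r_x\circ\ell_{f_j})$ via the trace formula for the symmetric algebra $R(H)$ with Casimir element $\sum_k\chi_k\otimes s(\chi_k)$, together with the identity $\sum_k\chi_kf_js(\chi_k)=\tfrac{d}{\dim(f_jH^*)}F_j$ imported from~\cite{cw3}; this yields $\langle\mu_j,x\rangle=\tfrac{d}{\dim(f_jH^*)}\langle\Lambda,F_jx\rangle$ in one line. You replace that imported identity by direct Wedderburn bookkeeping on $R(H)$, reducing everything to the scalar $s_j=\langle\Lambda,f_j\rangle$ and then to the regular-character identity $\chi_{\mathrm{reg}}^{H^*}=d\Lambda$. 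Your route is more self-contained (no appeal to the Higman/Casimir formula from~\cite{cw3}); the paper's is shorter once that formula is available.

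The gap is in your justification of $\chi_{\mathrm{reg}}^{H^*}=d\Lambda$. You assert that $\chi_{\mathrm{reg}}^{H^*}$ is central in $H$ and then compare values on $R(H)$. But a priori the regular character of $H^*$ is only a trace function on $H^*$, i.e.\ a \emph{cocommutative} element of $H$ (it lies in $R(H^*)=\mathrm{Coc}(H)$, which need not equal $Z(H)$), so checking it against the basis $\{\chi_i\}$ of $R(H)$ does not determine it; and your claimed value $\chi_{\mathrm{reg}}^{H^*}(\chi_i)=\mathrm{Tr}_{H^*}(\ell_{\chi_i})=d\delta_{i,0}$ for $i>0$ is not independently established. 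The clean fix is to note that the regular character of a semisimple Hopf algebra is an integral of its dual: the paper itself records $\lambda=\chi_H=\sum_id_i\chi_i$ for $H$, and the same fact applied to $H^*$ shows that $\chi_{\mathrm{reg}}^{H^*}$ is an integral of $(H^*)^*=H$, hence equals $c\Lambda$ for some scalar $c$. Evaluating at $\varepsilon=1_{H^*}$ gives $c=\dim H^*=d$. With this in place your argument goes through, and the formula $d\langle\Lambda,f_j\rangle=\dim(f_jH^*)$ you derive is in fact equivalent to the relation the paper quotes from \cite[(8),(9)]{cw2}.
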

\begin{proof}

It can be seen  that for all $0\le j\le  m,$
\begin{equation}\label{casimirfi}
\sum_{k=0}^{n-1}\chi_kf_js(\chi_k)=\frac{d}{\dim(f_jH^*)}F_j.\end{equation}
See e.g \cite[Prop.2.1]{cw3} where $\sum_{k=0}^{n-1}\chi_kf_js(\chi_k)$ is denoted there by $\tau_C(f_j).$
By the definition of $\mu_j$ we have:
 $$\left\langle \mu_j,x\right\rangle =Trace(r_x\circ l_{f_j}).$$

By the trace formula for symmetric algebras (see e.g. \cite[Lemma 1.3]{cw3}), we have:
$$\left\langle \mu_j,x\right\rangle =Trace(r_x\circ l_{f_j})=\sum_k\left\langle \gL,\chi_kf_j s(\chi_k)x\right\rangle =\frac{d}{\dim(f_jH^*)}\left\langle \gL,F_jx\right\rangle .$$
The result follows now from the definition of $\eta_j.$
\end{proof}

\begin{rema}\label{iso}
Since $f_jR(H)\cong f_{j'}R(H)$ for any other primitive idempotent $f_{j'}$ of $R(H)$ belonging to  $F_j,$ it follows that $\mu_j$ does not depend on the choice of the primitive idempotent belonging to $F_j$ as was demonstrated in Theorem \ref{muf}.
\end{rema}
As a corollary of Theorem \ref{muf} we obtain,
\begin{coro}\label{algebraic}
All entries of the character table  are algebraic integers.
\end{coro}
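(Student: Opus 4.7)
The plan is to combine Theorem \ref{muf} with the integrality of the fusion coefficients. By that theorem, the entry $\xi_{ij} = \langle \chi_i,\eta_j\rangle$ coincides with the value $\mu_j(\chi_i)$ of the irreducible character $\mu_j$ of $R(H)$ afforded by the simple right module $f_jR(H)$. Concretely, from the computation in the proof of Theorem \ref{muf} one reads off
$$\xi_{ij}=\langle\chi_i,\eta_j\rangle=\langle\mu_j,\chi_i\rangle=\operatorname{Trace}\bigl(r_{\chi_i}\bigm|_{f_jR(H)}\bigr).$$
So I only need to argue that this trace is an algebraic integer.

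Next I would observe that the matrix of $r_{\chi_i}$ on $R(H)$ in the basis $\{\chi_0,\dots,\chi_{n-1}\}$ of irreducible characters has entries equal to the fusion coefficients $m_{ki}^l$, which are non-negative integers. Consequently the characteristic polynomial of $r_{\chi_i}$ on $R(H)$ lies in $\Z[X]$, and its eigenvalues are algebraic integers. Since $f_jR(H)$ is a right $R(H)$-submodule of $R(H)$, it is invariant under $r_{\chi_i}$, and hence the eigenvalues of the restriction $r_{\chi_i}|_{f_jR(H)}$ form a sub-multiset of the eigenvalues of $r_{\chi_i}$. In particular, they are algebraic integers, and therefore so is their sum $\operatorname{Trace}(r_{\chi_i}|_{f_jR(H)})=\xi_{ij}$.

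There is no real obstacle here: once Theorem \ref{muf} identifies $\eta_j$ with the character $\mu_j$ of $R(H)$, the argument reduces to the classical fact that the values of characters of an algebra whose regular representation has integer structure constants are algebraic integers. The only mild subtlety is keeping the direction of multiplication straight so that $f_jR(H)$ genuinely is $r_{\chi_i}$-stable; this is automatic because $f_jR(H)$ is a right ideal.
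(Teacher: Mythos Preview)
Your proof is correct and follows essentially the same route as the paper: both invoke Theorem~\ref{muf} to identify $\xi_{ij}$ with the trace of $r_{\chi_i}$ on $f_jR(H)$, and both use integrality of $\chi_i$ to conclude that the eigenvalues (hence the trace) are algebraic integers. The only cosmetic difference is that the paper cites \cite{lo} for the fact that each $\chi_i$ satisfies a monic integer polynomial, whereas you derive this directly from the observation that the matrix of $r_{\chi_i}$ in the basis $\{\chi_k\}$ has integer entries $m_{ki}^l$; these are the same argument.
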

\begin{proof}
As proved in \cite{lo}, any character $\chi_i$ satisfies a monic polynomial over $\Z,$  hence so does its image under the representation
$\rho_{f_jR(H)}.$    It follows that all  eigenvalues of $\rho_{f_jR(H)}(\chi_i)$  are algebraic integers and hence so is trace$(\rho_{f_jR(H)}(\chi_i)).$ By Theorem \ref{muf}  trace$(\rho_{f_jR(H)}(\chi_i))=\left\langle \chi_i,\eta_j\right\rangle .$
\end{proof}

As a corollary of Theorem \ref{muf} and \eqref{mu} we obtain:
\begin{coro}\label{adjoin}
When $k=\C,$ then for any normalized class sum in $H,$ and for any element $x\in R(H),$
$$\left\langle x^*,\eta_j\right\rangle =\ol{\left\langle x,\eta_j\right\rangle }.$$
\end{coro}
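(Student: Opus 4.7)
The plan is to combine Theorem \ref{muf} with the identity \eqref{mu} in a one-line argument; essentially, everything has already been done and this corollary is just the translation.

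By Theorem \ref{muf}, the normalized class sum $\eta_j \in Z(H)$ represents the irreducible character $\mu_j$ of $R(H)$ coming from the right $R(H)$-module $f_jR(H)$, in the sense that the pairing of any $x \in R(H)$ with $\eta_j$ recovers the value of $\mu_j$ at $x$:
$$\langle x,\eta_j\rangle = \langle \mu_j, x\rangle.$$
So the first step is to rewrite both sides of the claimed equality as values of the character $\mu_j$ on $R(H)$: the left side is $\langle \mu_j, x^*\rangle$ and the right side is $\overline{\langle \mu_j, x\rangle}$.

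The second step is to invoke \eqref{mu} (from Nichols--Richmond \cite{nr2}), which says exactly that for any algebra character $\mu$ of $R(H)$ and any $x \in R(H)$ one has $\langle \mu, x^*\rangle = \overline{\langle \mu, x\rangle}$. Applying this with $\mu = \mu_j$ finishes the proof. There is no serious obstacle here; the only thing to check is that the identification in Theorem \ref{muf} is bidirectional, i.e.\ that the pairing $\langle -, \eta_j\rangle$ on $R(H)$ literally equals the functional $\mu_j$, which is exactly what is established in the proof of Theorem \ref{muf} via the trace formula. Thus the corollary is an immediate consequence of these two results.
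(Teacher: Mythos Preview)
Your proposal is correct and matches the paper's approach exactly: the paper simply states this corollary as an immediate consequence of Theorem \ref{muf} and equation \eqref{mu}, with no further argument given.
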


 We denote $\chi_i^*=s(\chi_i)$ by $\chi_{i^*}.$ Note that since $\gL$ is cocommutative, we have $S(C_j)=S(\gL\leftharpoonup dF_j)=\gL\leftharpoonup dSF_j.$
Since $\dim(F_jH^*)=\dim (H^*F_j)=\dim((SF_j)H^*)$, it follows that $S\eta_j$ is a normalized class sum as well. Denote also $S\eta_j$ by $\eta_{j^*}.$
 Then Corollary \ref{adjoin} implies in particular that
\begin{equation}\label{starj} \xi_{i^*j}=\xi_{ij^*}=\ol{\xi_{ij}}. \end{equation}

We can prove orthogonality relations for the columns of the generalized character table as follows:
\begin{theorem}\label{ortho1} Let $H$ be a $d$-dimsnsional semisimple Hopf algebra over $\C,$ then
$$\sum_k\xi_{ki}\ol{\xi_{kj}}=\gd_{ij}\frac{d\dim(f_iR(H))}{\dim(f_iH^*)}.$$
\end{theorem}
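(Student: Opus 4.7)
The plan is to reduce the statement to a computation inside the symmetric algebra $R(H)$. By Theorem \ref{muf}, $\xi_{ki}=\langle\chi_k,\eta_i\rangle=\mu_i(\chi_k)$, and by \eqref{mu} (equivalently Corollary \ref{adjoin}) we have $\overline{\xi_{kj}}=\overline{\mu_j(\chi_k)}=\mu_j(\chi_k^*)=\mu_j(s(\chi_k))$. So the target sum becomes
\[ \sum_k\xi_{ki}\,\ol{\xi_{kj}}=\sum_k\mu_i(\chi_k)\mu_j(s(\chi_k)). \]
The key input is the explicit realization of $\mu_j$ that falls out of the proof of Theorem \ref{muf}, namely
\[ \mu_j(x)=\frac{d}{\dim(f_jH^*)}\,\langle\gL,F_jx\rangle\qquad(x\in R(H)). \]

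Substituting this formula for both $\mu_i$ and $\mu_j$ gives
\[ \sum_k\xi_{ki}\,\ol{\xi_{kj}}=\frac{d^2}{\dim(f_iH^*)\dim(f_jH^*)}\sum_k\langle\gL,F_i\chi_k\rangle\,\langle\gL,F_js(\chi_k)\rangle. \]
Here is where the Casimir identity \eqref{casimir2} does the work: applied to $p=F_i$ it says $F_i=\sum_k\langle\gL,F_i\chi_k\rangle\,s(\chi_k)$. Pulling the scalar $\langle\gL,F_i\chi_k\rangle$ inside the second pairing and summing, the inner sum collapses to
\[ \sum_k\langle\gL,F_i\chi_k\rangle\,\langle\gL,F_js(\chi_k)\rangle=\Big\langle\gL,\,F_j\sum_k\langle\gL,F_i\chi_k\rangle s(\chi_k)\Big\rangle=\langle\gL,F_jF_i\rangle=\gd_{ij}\langle\gL,F_i\rangle. \]

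The remaining scalar $\langle\gL,F_i\rangle$ is identified by plugging $x=F_i$ back into the formula for $\mu_i$: since $F_i$ acts as the identity on $f_iR(H)$, we have $\mu_i(F_i)=\dim f_iR(H)=m_i$, which rearranges to $\langle\gL,F_i\rangle=m_i\dim(f_iH^*)/d$. Combining everything yields
\[ \sum_k\xi_{ki}\,\ol{\xi_{kj}}=\gd_{ij}\,\frac{d^2}{\dim(f_iH^*)^2}\cdot\frac{m_i\dim(f_iH^*)}{d}=\gd_{ij}\,\frac{d\dim(f_iR(H))}{\dim(f_iH^*)}, \]
as required. There is no real obstacle; the only subtle point is orchestrating the application of \eqref{casimir2}, so that the dual-basis sum $\sum_k\chi_k\otimes s(\chi_k)$ lands in the right place to produce $\langle\gL,F_jF_i\rangle$. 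The conversion $\overline{\mu_j(\chi_k)}=\mu_j(s(\chi_k))$ via \eqref{mu} is what makes this application possible, and it is also the step that forces the hypothesis $k=\C$ (for complex conjugation to have meaning).
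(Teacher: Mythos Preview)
Your proof is correct and follows essentially the same route as the paper's own argument: both convert $\ol{\xi_{kj}}$ to $\langle s(\chi_k),\eta_j\rangle$, both collapse the sum via the Casimir identity \eqref{casimir2} to obtain $\gd_{ij}\langle\gL,F_i\rangle$, and both finish by evaluating $\langle\gL,F_i\rangle$. The only cosmetic difference is that you package the pairings through the explicit formula $\mu_j(x)=\frac{d}{\dim(f_jH^*)}\langle\gL,F_jx\rangle$ from the proof of Theorem~\ref{muf}, and you compute $\langle\gL,F_i\rangle$ internally via $\mu_i(F_i)=m_i$, whereas the paper cites an external reference for that last scalar; your derivation is arguably cleaner and more self-contained.
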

\begin{proof}
\begin{eqnarray*}
\lefteqn{\sum_k\xi_{ki}\ol{\xi_{kj}}=}\\
&=&\sum_k\langle  \chi_k,\eta_i\rangle  \langle  s(\chi_k),\eta_j\rangle  \qquad\text{(by \eqref{starj})} \\
&=& \frac{1}{\dim(H^*f_i)}\langle  \sum_k \langle  \chi_k,(\gL\leftharpoonup dF_i)\rangle   s(\chi_k),\eta_j\rangle  \\
&=&\frac{d}{\dim(H^*f_i)}\langle  \sum_k\langle   F_i\chi_k,\gL\rangle   s(\chi_k),\eta_j\rangle  \\
&=&\frac{d}{\dim(H^*f_i)}\langle  F_i,\frac{1}{\dim(H^*f_j)}(\gL\leftharpoonup dF_j)\rangle  \qquad\text{(by \eqref{casimir2})}\\
&=&\gd_{ij}\frac{d^2}{(\dim(f_iH^*))^2}\langle  \gL,F_i\rangle  \\
&=&\gd_{ij}\frac{d\dim(f_iR(H))}{\dim(H^*f_i)}\qquad\text{(by \cite[(8) and (9)]{cw2})}
\end{eqnarray*}
\end{proof}
\begin{rema}\label{cjcharacter}
When $R(H)$ is commutative then $\dim f_jR(H)=1,$ hence for $i=j$ the right hand side of the orthogonality relations boils down to $\frac{d}{\dim  {\mathfrak C}_j}.$ Thus, as for groups, the dimension of each conjugacy class can be recovered from the character table.
\end{rema}

\medskip Another norm of an element $x\in R(H)$ was defined in \cite{nr2}   as follows:
\begin{equation}\label{normn}
N(x)=\sup_{0\ne u\in R(H)}\frac{\|ux\|}{\|u\|}\end{equation}
where $\|\,\|$ is the norm obtained from the inner product given in \eqref{inner}. It was proved there \cite[Th.14.2]{nr2} that for a character $\chi,$
$$\left\langle \chi,1\right\rangle =N(\chi)$$
This implies that if $u\in R(H)$ is a nonzero eigenvector  of  $r_{\chi}$ with an eigenvalue $\ga,$ then
\begin{equation}\label{ev}|\ga|\le\left\langle \chi,1\right\rangle.\end{equation}


The following lemma about complex numbers will be an essential tool in analyzing   values related to the character table.
\begin{lemma}\label{complex}
Let $\{\ga_k\}_{k=1}^m$ be a set of complex numbers with $\ga_1\ne 0$ and let $\{a_k\}_{k=1}^m$ be a set of non-negative real numbers satisfying:
$$ |\sum_{k=1}^m \ga_k|=\sum_{k=1}^m a_k\quad\text{and}\quad |\ga_k|\le a_k$$
for all $1\le k\le m.$ Then:

{\rm (i)} There exists an $\go\in\C,\;|\go|=1$ so that for all $1\le k\le n,$
$$\ga_k=\go a_k.$$

{\rm (ii)}. If $\ga_j$ is real for some $1\le j\le m,$ then $\ga_k=a_k$ (thus is real) for all $k.$
\end{lemma}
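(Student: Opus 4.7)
The hypotheses sandwich a single quantity from both sides, so the strategy is to read off equality cases of the triangle inequality. Concretely, I would start from the chain
\[
\sum_{k=1}^m a_k \;=\; \Bigl|\sum_{k=1}^m \alpha_k\Bigr| \;\le\; \sum_{k=1}^m |\alpha_k| \;\le\; \sum_{k=1}^m a_k,
\]
where the first equality is the assumption, the middle step is the triangle inequality, and the last step uses the pointwise bound $|\alpha_k|\le a_k$. Equality throughout forces two conclusions at once: first, summing the pointwise inequalities $|\alpha_k|\le a_k$ to an equality with nonnegative terms yields $|\alpha_k|=a_k$ for every $k$; second, the middle step is the equality case of the complex triangle inequality.

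For part (i), I would invoke the standard fact that equality in the complex triangle inequality forces all nonzero summands to share a common argument. Since $\alpha_1\neq 0$ (hence $a_1>0$), the natural choice is
\[
\omega \;=\; \frac{\alpha_1}{|\alpha_1|} \;=\; \frac{\alpha_1}{a_1},
\]
which has $|\omega|=1$. For each $k$ with $\alpha_k\neq 0$ the equality case gives $\alpha_k/|\alpha_k|=\omega$, so $\alpha_k=\omega|\alpha_k|=\omega a_k$; and for $k$ with $\alpha_k=0$, the relation $|\alpha_k|=a_k$ forces $a_k=0$, so $\alpha_k=\omega a_k$ holds trivially.

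For part (ii), I would plug the realness hypothesis into the formula from (i). If $\alpha_j\in\mathbb R$ and $a_j\neq 0$, then $\omega=\alpha_j/a_j$ is real, and since $|\omega|=1$ this forces $\omega=\pm 1$; either way every $\alpha_k=\omega a_k=\pm a_k$ is real. (If $a_j=0$ then the realness of $\alpha_j=0$ is vacuous, so the useful content of the hypothesis is that realness holds at some index with $a_j\neq 0$.) The cleanest way to rule out $\omega=-1$ and obtain the sharper $\alpha_k=a_k$ is to reinspect the equality $|\sum\alpha_k|=\sum a_k$: since $\sum a_k\ge 0$ and $\sum\alpha_k=\omega\sum a_k$, the two sums have the same sign, so in any application where $\sum\alpha_k$ is known to be nonnegative (which will be the typical situation, with $\alpha_1=a_1>0$ already forcing $\omega=1$ directly) we conclude $\omega=1$.

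The proof is essentially an unpacking of the equality case of the triangle inequality, so the only real subtlety is bookkeeping around the case $\alpha_k=0$; no step should present a genuine obstacle.
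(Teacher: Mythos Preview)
Your argument for part (i) is correct and essentially identical to the paper's: both establish the chain
\[
\sum_k a_k = \Bigl|\sum_k \alpha_k\Bigr| \le \sum_k |\alpha_k| \le \sum_k a_k,
\]
deduce $|\alpha_k|=a_k$ termwise, and then invoke the equality case of the triangle inequality to align all nonzero $\alpha_k$ with $\alpha_1$. The paper phrases the last step via the auxiliary relation $\alpha_k=r_k\alpha_1$ with $r_k\ge 0$, while you go directly to $\omega=\alpha_1/|\alpha_1|$; these are the same argument.

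For part (ii) your caution is well placed, and in fact you are being more careful than the paper. The paper's proof asserts that if $\alpha_j$ is real then $\omega_j=1$, but as you observe, realness of $\alpha_j=\omega a_j$ with $a_j>0$ only yields $\omega\in\{\pm 1\}$. Indeed the statement of (ii) as written is not quite true (take $m=1$, $\alpha_1=-1$, $a_1=1$); what is actually needed, and what holds in every application of the lemma in the paper, is that some $\alpha_j$ is a \emph{nonnegative} real, which immediately forces $\omega=1$. Your parenthetical remark that ``$\alpha_1=a_1>0$ already forc[es] $\omega=1$ directly'' is exactly the right fix. So your proposal is correct and matches the paper's approach, with the added merit of flagging a genuine imprecision in the statement.
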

\begin{proof}
By assumptions,
$$\sum_{k=1}^m a_k= |\sum_{k=1}^m \ga_k|\le \sum_{k=1}^m |\ga_k|\le \sum_{k=1}^m a_k.$$
Hence both inequalities are equalities. The last equality and the second assumption implies that $|\ga_k|=a_k$ or equivalently, $\ga_k=\go_ka_k$ where $|\go_k|=1.$  We wish to show that $\go_k=\go_1$ for all $k.$

Well, the triangle inequality for complex numbers implies that for any $k$ there exists a non-negative real number $r_k$ so that $\ga_k=r_k\ga_1,$  that is,
\begin{equation}\label{ak}\go_ka_k=r_k\go_1a_1.\end{equation}
Since $|\go_k|=|\go_1|=1,$ we have,
$r_k=\frac{a_k}{a_1}.$ Substituting in \eqref{ak}, we get $\go_k=\go_1$ for all $1\le k\le m.$

(ii) If $\ga_j$ is real for some $j,$ then  $\go_j=1$ hence $\go_1=1$ and all $\go_k$ equal $1$ as well and the result follows.
\end{proof}

We show:
\begin{theorem}\label{maxgeneral}
Let $H$ be a semisimple Hopf algebra over $\C$ with a character table $(\xi_{ij})$ where $\xi_{ij}=\langle\chi_i,\eta_j\rangle $ and   $f_jR(H)$  an irreducible right $R(H)$ module of dimension $m_j.$  Then

{\rm (i)} In each entry of the $i$-th row of the character table we have:
$$|\langle\chi_i,\eta_j\rangle  |\le m_j\langle\chi_i,1\rangle .$$

{\rm (ii)} Equality holds if and only if $r_{\chi_i}$ acts on $f_jR(H)$ as $\ga_i\Id_{f_jR(H)},$ where $|\ga_i|=\langle \chi_i,1\rangle.$

{\rm (iii)}  $\langle\chi_i,\eta_j\rangle = m_j\langle\chi_i,1\rangle $ if and only if $r_{\chi_i}$ acts on $f_jR(H)$ as $\langle\chi_i,1\rangle \Id_{f_jR(H)}.$
\end{theorem}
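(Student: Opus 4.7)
The plan is to use Theorem \ref{muf} to rewrite the character table entry as a trace. Since $\mu_j$ is identified with $\eta_j$, the value $\langle\chi_i,\eta_j\rangle$ equals the trace of right multiplication by $\chi_i$ on the simple right $R(H)$-module $f_jR(H)$ of dimension $m_j$:
$$\langle\chi_i,\eta_j\rangle=\operatorname{trace}\bigl(r_{\chi_i}\big|_{f_jR(H)}\bigr)=\sum_{k=1}^{m_j}\alpha_k,$$
where $\alpha_1,\dots,\alpha_{m_j}$ are the eigenvalues of $r_{\chi_i}|_{f_jR(H)}$ counted with algebraic multiplicity. Since $f_jR(H)$ is invariant under $r_{\chi_i}$, each such $\alpha_k$ is in particular an eigenvalue of $r_{\chi_i}$ acting on $R(H)$, so the bound \eqref{ev} together with the Nichols--Richmond identity $N(\chi_i)=\langle\chi_i,1\rangle$ gives $|\alpha_k|\le\langle\chi_i,1\rangle$; the triangle inequality then yields (i) immediately.

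For the nontrivial direction in (ii), the equality $|\sum_k\alpha_k|=m_j\langle\chi_i,1\rangle$ together with the bound $|\alpha_k|\le\langle\chi_i,1\rangle$ triggers Lemma \ref{complex}(i) with $a_k=\langle\chi_i,1\rangle$, producing a single $\omega\in\C$ with $|\omega|=1$ and $\alpha_k=\omega\langle\chi_i,1\rangle$ for every $k$. Part (iii) is analogous: when the trace is the positive real number $m_j\langle\chi_i,1\rangle$, Lemma \ref{complex}(ii) forces $\omega=1$. The ``if'' directions of (ii) and (iii) are immediate trace computations for a scalar operator.

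The main obstacle is to promote ``all eigenvalues equal to $\omega\langle\chi_i,1\rangle$'' to the genuinely stronger conclusion $r_{\chi_i}|_{f_jR(H)}=\omega\langle\chi_i,1\rangle\,\Id_{f_jR(H)}$, since in a semisimple algebra a non-scalar operator on a simple module can still have coincident eigenvalues through a nontrivial Jordan block. I would rule this out by a powers-of-$\chi_i$ argument. Suppose a generalized eigenvector $w$ of level two existed: $w\chi_i=\lambda w+u$ with $\lambda=\omega\langle\chi_i,1\rangle$, $u\ne 0$ and $u\chi_i=\lambda u$. A straightforward induction gives $w\chi_i^k=\lambda^k w+k\lambda^{k-1}u$, whence
$$\|w\chi_i^k\|\ge k\langle\chi_i,1\rangle^{k-1}\|u\|-\langle\chi_i,1\rangle^k\|w\|.$$
On the other hand, iterating the operator-norm bound $\|v\chi_i\|\le N(\chi_i)\|v\|=\langle\chi_i,1\rangle\|v\|$ yields $\|w\chi_i^k\|\le\langle\chi_i,1\rangle^k\|w\|$, which is a contradiction for $k$ sufficiently large. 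Hence $r_{\chi_i}|_{f_jR(H)}$ is diagonalizable, and combined with the equal-eigenvalue conclusion it must equal the required scalar.
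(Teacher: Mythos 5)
Your argument is correct and follows the same backbone as the paper's proof: Theorem \ref{muf} turns $\langle\chi_i,\eta_j\rangle$ into the trace of $r_{\chi_i}$ on the $m_j$-dimensional simple module $f_jR(H)$, the Nichols--Richmond bound \eqref{ev} controls each eigenvalue by $\langle\chi_i,1\rangle$, and Lemma \ref{complex} converts equality in the triangle inequality into the statement that all eigenvalues equal $\go\langle\chi_i,1\rangle$ for a single unimodular $\go$. Where you go beyond the paper is the final step of (ii): the paper passes from ``all eigenvalues equal'' to ``$r_{\chi_i}$ acts as a scalar'' with the words ``hence the result follows,'' leaving the diagonalizability of $r_{\chi_i}|_{f_jR(H)}$ implicit, whereas you explicitly rule out a nontrivial Jordan block by iterating $w\chi_i^k=\gl^kw+k\gl^{k-1}u$ against the submultiplicative bound $\|w\chi_i^k\|\le N(\chi_i)^k\|w\|=\langle\chi_i,1\rangle^k\|w\|$. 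That growth argument is sound (the lower bound $k\langle\chi_i,1\rangle^{k-1}\|u\|-\langle\chi_i,1\rangle^k\|w\|$ eventually exceeds the upper bound since $\|u\|>0$ and $\langle\chi_i,1\rangle\ge 1$), and it addresses a genuine point: right multiplication by an element of a semisimple algebra need not be a diagonalizable operator on a simple module, so the scalar conclusion does require the extremality of $|\gl|=N(\chi_i)$, exactly as you use it. One cosmetic remark on (iii): Lemma \ref{complex}(ii) is stated for an individual $\ga_k$ being real, but since part (ii) already gives $\ga_k=\go\langle\chi_i,1\rangle$ for all $k$, the reality and positivity of the trace $\go\, m_j\langle\chi_i,1\rangle$ forces $\go=1$ directly, which is what the paper does.
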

\begin{proof}
(i) By \eqref{ev}  each eigenvalue $\ga_{k},$ of the action of $r_{\chi_i}$ on  $f_jR(H),$   satisfies $|\ga_{k}|\le\langle\chi_i,1\rangle,$ for all $\,1\le k\le m_j.$  By Theorem \ref{muf}, $\left\langle \chi_i,\eta_j\right\rangle$ is the trace of this action, hence,
$$|\left\langle \chi_i,\eta_j\right\rangle|=|\sum_{k=1}^{m_j}\ga_{k}|\le\sum_{k=1}^{m_j}|\ga_{k}|\le m_j\langle\chi_i,1\rangle.$$

(ii) If equality holds in (i), then by applying Lemma \ref{complex} with $m=m_j$ and $a_k=\langle\chi_i,1\rangle$ for all $k,$   we obtain that all $\ga_k$ are equal and $|\ga_k|=\langle\chi_i,1\rangle.$  Hence the result follows. The converse is obvious.

(iii) If $\langle  \chi_i,\eta_j\rangle  =m_j\langle  \chi_i,1\rangle$ then by part (ii) it follows that $r_{\chi_i}$ acts on $f_jR(H)$ as $\ga\Id_{f_jR(H)}.$ Thus by Theorem \ref{muf}, $\langle  \chi_i,\eta_j\rangle  =\ga m_j,$ hence by assumption $\ga=\langle  \chi_i,\eta_j\rangle.$ The converse is obvious again.
\end{proof}

\begin{rema}\label{iso2}
Observe that  the theorem above is independent on the choice of $f_j\in F_jR(H).$  This follows for part (i) from Remark \ref{iso} and for parts (ii) and (iii) from the right $R(H)$-module isomorphism between $f_jR(H)$ and $f_{j'}R(H)$ for any other primitive idempotent on
\end{rema}

\section{Generalized kernels}
In this section we generalize an important normal subgroup that occurs in group theory. Let $G$ be a finite group and $V$ a representation of $G$  with associatedcharacter $\chi=\chi_V.$ Define
$$Z(\chi)=\{g\in G\,|\,|\langle\chi,g\rangle|= \langle\chi,1\rangle\}.$$
It is shown (see e.g. \cite{is}) that
$$g\in Z(\chi)\Leftrightarrow g\cdot v=\go_g v, \;\forall v\in V.$$
Note nthat $\go_g$ is a root of unity. In this sense $Z(\chi)$ is a `semi kernel' of $V.$
Based on this we define the {\bf $\go$-left kernel} as follows. Let $V$ be a representation of $H,$ set:
\begin{equation}\label{twist}  {\rm LKer}^{\go}_V=\{h\in H|h_1\ot h_2\cdot v=\go h\ot v\quad\forall v\in V,\;|\go|=1\}\end{equation}

When $\go=1$ then  ${\rm LKer}^{\go}_V$ boils down to the left kernel defined  in \cite{bu2} as follows:
\begin{equation}\label{lker}{\rm LKer}_V=\{h\in H\,|\,\sum h_1\ot h_2\cdot v= h\ot v,\,\forall v\in V\}.\end{equation}

Note that
\begin{equation}\label{kepsilon}{\rm LKer}_V=H\Leftrightarrow \chi_V=\gep.\end{equation}

Moreover, If $A$ a left coideal of $H$  then:
\begin{equation}\label{epsilon}A\subset{\rm LKer_V}\Leftrightarrow a\cdot v =\langle  \gep,a\rangle   v,\;\forall a\in A,\,v\in V.\end{equation}

We show:
\begin{lemma}\label{twistl}
Let $V$ be a representation of $H.$

\medskip{\rm (i)} If $0\ne {\rm LKer}^{\go}_V$   then it is a non-zero left $D(H)$-module, that is, it is a left coideal stable under the left adjoint action of $H.$

\medskip {\rm (ii)} $({\rm LKer}^{\go}_V)({\rm LKer}^{\go'}_V)\subset {\rm LKer}^{\go\go'}_V.$

\medskip{\rm (iii)} If  $\go^l=1$ then ${\rm LKer}^{\go}_V\subset {\rm LKer}_{V^{\ot l}}.$

\medskip{\rm (iv)} Set $Lz_{\chi}=\bigoplus_{\go}{\rm LKer}^{\go}_V.$ Then $Lz_{\chi}$ is a normal left coideal subalgebra of $H.$
\end{lemma}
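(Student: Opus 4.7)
The plan is to handle (i)--(iv) sequentially, driven by a reformulation of the defining condition. Contracting $\sum h_{(1)}\ot h_{(2)}\cdot v = \go h\ot v$ against the matrix coefficients $\{x_{ji}\}$ of $V$ shows $h\in {\rm LKer}^{\go}_V$ if and only if $h\leftharpoonup p = \go\langle p,1\rangle h$ for every $p$ in the subcoalgebra $R_V\subset H^*$. Applying $\gep\ot{\rm id}_V$ to the definition also yields $h\cdot v = \go\,\gep(h)\,v$, so each element of ${\rm LKer}^{\go}_V$ acts on $V$ as the scalar $\go\,\gep(h)$. These reformulations drive the arguments below.

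For (i), the coideal property is the main difficulty. Apply $\Delta\ot{\rm id}_V$ to the defining relation and use coassociativity to obtain, in $H\ot H\ot V$,
$$\sum h_{(1)}\ot h_{(2)}\ot h_{(3)}\cdot v \;=\; \go\sum h_{(1)}\ot h_{(2)}\ot v.$$
Contract the first tensor factor against an arbitrary $p\in H^*$ and set $g_p := \sum\langle p,h_{(1)}\rangle h_{(2)}\in H$. The identity collapses to $\sum (g_p)_{(1)}\ot (g_p)_{(2)}\cdot v = \go\, g_p\ot v$, which is precisely the defining relation for $g_p\in {\rm LKer}^{\go}_V$. Since $\{g_p:p\in H^*\}$ is the smallest subspace $W\subset H$ with $\Delta(h)\in H\ot W$, this forces $\Delta(h)\in H\ot {\rm LKer}^{\go}_V$. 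For stability under the adjoint action, expand $\Delta(h_{(1)}aS(h_{(2)})) = h_{(1)}a_{(1)}S(h_{(4)})\ot h_{(2)}a_{(2)}S(h_{(3)})$, apply the ${\rm LKer}^{\go}_V$-condition on $a$ with test vector $S(h_{(3)})v$ to introduce the factor $\go$, and collapse the middle via the antipode identity $\sum h_{(1)}\ot h_{(2)}S(h_{(3)})\ot h_{(4)} = \sum h_{(1)}\ot 1\ot h_{(2)}$, arriving at $\go(h\ad a)\ot v$.

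Part (ii) is a direct Sweedler computation: expand $\Delta(ab)=\Delta(a)\Delta(b)$, act on $v$, and peel off the eigenvalues by applying the $a$-condition with test vector $b_{(2)}\cdot v$ to produce the factor $\go$, then the $b$-condition to produce $\go'$, obtaining $\go\go'\,ab\ot v$. Part (iii) is cleanest in the $\leftharpoonup$ reformulation. Matrix coefficients of $V^{\ot l}$ are $l$-fold products of matrix coefficients of $V$, so $R_{V^{\ot l}}$ is spanned by products $p_1\cdots p_l$ with $p_i\in R_V$. Iterating the eigenvalue relation $h\leftharpoonup p_i = \go\langle p_i,1\rangle h$ yields $h\leftharpoonup(p_1\cdots p_l) = \go^l\langle p_1\cdots p_l,1\rangle h$, which equals $\langle q,1\rangle h$ precisely when $\go^l=1$; by linearity this extends to all $q\in R_{V^{\ot l}}$, giving $h\in{\rm LKer}_{V^{\ot l}}$.

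For (iv), the sum is direct because a nonzero $h$ satisfying the defining relation for both $\go$ and $\go'$ would force $\go=\go'$. By (i) each summand is a left coideal stable under the adjoint action of $H$, so $Lz_\chi$ inherits both properties; by (ii) multiplication respects the $\go$-grading and closes inside $Lz_\chi$; and $1\in {\rm LKer}^{1}_V$ makes $Lz_\chi$ unital. Together these yield a normal left coideal subalgebra of $H$. The main obstacle I anticipate is the coideal step in (i): passing from the single Sweedler identity to the individual conclusion that each $h_{(2)}$ lies in ${\rm LKer}^{\go}_V$ is exactly what the device of contracting against arbitrary $p\in H^*$ achieves.
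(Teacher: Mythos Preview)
Your argument is correct and in fact more explicit than the paper's, which for (i) simply points to Burciu's proof for ${\rm LKer}_V$ as carrying over verbatim, calls (ii) a ``direct computation,'' and says (iv) ``follows from (i) and (ii).'' The only substantive methodological difference is in (iii): the paper first uses the left-coideal property from (i) together with the scalar-action observation $h\cdot v=\go\,\gep(h)v$ to compute $h\cdot(v_1\ot\cdots\ot v_l)=\sum h_1\cdot v_1\ot\cdots\ot h_l\cdot v_l=\go^l\gep(h)(v_1\ot\cdots\ot v_l)$ and then invokes \eqref{epsilon}; your route through the $\leftharpoonup$-eigenvalue reformulation and the factorization $R_{V^{\ot l}}=R_V\cdots R_V$ is equally valid and has the minor advantage of not relying on (i). One small technical point in (iv): showing that a nonzero $h$ cannot satisfy the defining relation for two distinct $\go$'s only gives pairwise-trivial intersections, which is weaker than directness of the full sum. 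The clean fix is to observe that for any fixed $0\ne v\in V$ and $\phi\in V^*$ with $\phi(v)=1$, the linear operator $T(h)=\sum h_1\,\phi(h_2\cdot v)$ has ${\rm LKer}^\go_V$ inside its $\go$-eigenspace, whence the sum is direct; alternatively, note that directness is not actually needed for the conclusion that $Lz_\chi$ is a normal left coideal subalgebra.
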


\begin{proof}
\noin(i) The proof that ${\rm LKer}^{\go}_V$  is a left coideal stable under the left adjoint action of $H$ is exactly as  for these properties for ${\rm LKer}_V$ as shown in [2.1.3]\cite{bu2}.

\medskip\noin(ii) This follows by a direct computation.

\medskip\noin(iii)  Observe that ${\rm LKer}_{V_i}^{\go}$ is a left coideal so that each $h\in {\rm LKer}_{V_i}^{\go}$ acts on $V_i$ as $\go\left\langle\gep, h\right\rangle\Id.$ Hence we have for all $h\in {\rm LKer}_{V_i}^{\go},\;v_1,\dots,v_l\in V,$
$$h\cdot(v_1\ot\cdots\ot v_l)=\sum h_1\cdot v_1\ot\cdots\ot h_l\cdot v_l=\go^l\left\langle\gep, h\right\rangle(v_1\ot\cdots\ot v_l).$$ By \eqref{epsilon}, ${\rm LKer}^{\go}_V\subset {\rm LKer}_{V^{\ot l}}.$

\medskip\noin(iv) Follows from parts (i) and (ii).
\end{proof}

Since  $\go$-left kernels are left $D(H)$-modules, and since conjugacy classes are the irreducible left $D(H)$-modules it follows that  $\go$-left kernels are sums of conjugacy classes. In what follows we discuss further how they are related.

The following is the essential step in obtaining left kernels from the character table, generalizing the process of recovering normal subgroups of a finite group $G$ from its character table.
 Denote by ${\mathfrak C}^j$ the following left coideal of $H$ which is stable under the left adjoint action:
$${\mathfrak C}^j=\bigoplus_{k=1}^{m_j}{\mathfrak C}_{kj}=\gL\leftharpoonup F_jH^*.$$
Note that if $H$ is almost cocommutative then ${\mathfrak C}^j={\mathfrak C}_j,$ where ${\mathfrak C}_j$ is as defined in \eqref{fj0}.
Also note that $$\eta_j\in {\mathfrak C}^j\quad {\rm and}\quad\eta_j\leftharpoonup H^*={\mathfrak C}^j.$$

For a central  idempotent $F_j$ of $R(H)$  set
\begin{equation}\label{if}I_{F_j}=\{\chi|\chi F_j =\go_{\chi}\langle \chi,1\rangle F_j,\;\go_{\chi}\in\C,\,|\go_{\chi}|=1\},\end{equation}
where $\chi$ is any character of $H.$
Based on Theorem \ref{maxgeneral} we obtain the following:
\begin{proposition}\label{standard}
Let $H$ be a semisimple Hopf algebra over  $\C,\;F_j$ be a central primitive idempotent of $R(H),\;f_j$ any primitive idempotent of $R(H)$ so that $f_jF_j=f_j$ and $\dim f_jR(H)=m_j.$   Then:

{\rm (i)}   For any irreducible character $\chi_i,$
$$\chi_i\in I_{F_j}\Leftrightarrow |\langle \chi_i,\eta_j\rangle|=m_j \langle \chi_i,1\rangle.$$

{\rm (ii)} If $\chi\in I_{F_j}$ then so does each of its irreducible constituents $\chi_k,$ and $\go_{\chi_k}=\go_{\chi}.$

{\rm (iii)} For each $\chi\in I_F,\,\go_{\chi}$ is a root of unity.
\end{proposition}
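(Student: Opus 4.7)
The plan is to prove the three parts in sequence, using Theorem \ref{maxgeneral} throughout. For (i), in the forward direction I would argue that if $\chi_i F_j=\omega_i d_i F_j$, then for $y\in f_jR(H)\subset F_jR(H)$ we have $y=yF_j$, hence $y\chi_i=yF_j\chi_i=\omega_i d_i\, yF_j=\omega_i d_i y$; thus $r_{\chi_i}$ acts on $f_jR(H)$ as $\omega_i d_i\,\Id$, and Theorem \ref{muf} yields $\langle\chi_i,\eta_j\rangle=\omega_i m_j d_i$ of modulus $m_j d_i$. For the converse, Theorem \ref{maxgeneral}(ii) provides a scalar action $r_{\chi_i}=\alpha\,\Id$ on $f_jR(H)$ with $|\alpha|=d_i$; by Remark \ref{iso2} the same scalar $\alpha$ governs $r_{\chi_i}$ on every $f_{kj}R(H)$ through the right $R(H)$-module isomorphism $f_{kj}R(H)\cong f_jR(H)$, so $f_{kj}\chi_i=\alpha f_{kj}$ for each $k$, and summing over $k$ gives $F_j\chi_i=\alpha F_j$, i.e.\ $\chi_i\in I_{F_j}$ with $\omega_i=\alpha/d_i$.

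For (ii), I would decompose $\chi=\sum_k n_k\chi_k$ into irreducible constituents. The forward direction of (i), whose proof never used irreducibility, gives $\langle\chi,\eta_j\rangle=\omega_\chi m_j d_\chi$, so $\sum_k n_k\langle\chi_k,\eta_j\rangle=\omega_\chi m_j\sum_k n_k d_k$. Setting $\alpha_k=n_k\langle\chi_k,\eta_j\rangle$ and $a_k=n_k m_j d_k$, Theorem \ref{maxgeneral}(i) supplies $|\alpha_k|\le a_k$, while the identity above gives $|\sum_k\alpha_k|=\sum_k a_k$. Lemma \ref{complex}(i) then forces $\alpha_k=\omega' a_k$ for a common $|\omega'|=1$, and equating total sums identifies $\omega'=\omega_\chi$. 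Thus for each constituent $\chi_k$ with $n_k>0$, $|\langle\chi_k,\eta_j\rangle|=m_j d_k$, and the converse of (i) places $\chi_k\in I_{F_j}$ with $\omega_{\chi_k}=\omega_\chi$.

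For (iii), centrality of $F_j$ gives $\chi^n F_j=(\omega_\chi d_\chi)^n F_j$ by induction, and since $\langle\chi^n,1\rangle=d_\chi^n$ we obtain $\chi^n\in I_{F_j}$ with $\omega_{\chi^n}=\omega_\chi^n$. Applying (ii) to $\chi^n$ shows that every irreducible constituent of $\chi^n$ lies in $I_{F_j}$ with $\omega$-value $\omega_\chi^n$. Since $\mathrm{Irr}(H)$ is finite, so is $\mathrm{Irr}(H)\cap I_{F_j}$, and the map $\psi\mapsto\omega_\psi$ on this set has finite image; therefore $\{\omega_\chi^n:n\ge 0\}$ is finite, forcing $\omega_\chi^l=1$ for some $l\ge 1$.

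The main obstacle I anticipate is the converse direction of (i): Theorem \ref{maxgeneral}(ii) delivers a scalar action only on the single summand $f_jR(H)$, whereas $I_{F_j}$ requires the eigenvalue relation at the level of the full central idempotent $F_j$. The essential input is Remark \ref{iso2}, which guarantees the scalar $\alpha$ is intrinsic to $F_j$ rather than to the chosen primitive idempotent $f_j$, allowing the scalar actions on the summands $f_{kj}R(H)$ to reassemble cleanly into the desired equation on $F_j$.
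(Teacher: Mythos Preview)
Your proof of parts (i) and (ii) is essentially identical to the paper's: the same use of $f_jpF_j\chi_i=\alpha_i f_jp$ for the forward direction, the same appeal to Theorem~\ref{maxgeneral}(ii) and Remark~\ref{iso2} for the converse, and the same application of Lemma~\ref{complex} with $\alpha_k=n_k\langle\chi_k,\eta_j\rangle$ and $a_k=n_k m_j d_k$ for (ii).

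For (iii) you take a slightly different route. The paper invokes \cite[Prop.~5(3)]{nr2} directly: since $\varepsilon$ is a constituent of some $\chi^n$, part (ii) forces $\omega_\chi^n=\omega_\varepsilon=1$. You instead observe that each $\omega_\chi^n$ is the $\omega$-value of some irreducible constituent of $\chi^n$, and since $\mathrm{Irr}(H)\cap I_{F_j}$ is finite the powers $\omega_\chi^n$ can take only finitely many values. Both arguments are correct; yours is self-contained and avoids the external citation, while the paper's version is marginally sharper in that it identifies the order of $\omega_\chi$ with the least $n$ for which $\varepsilon$ appears in $\chi^n$ (a fact used again in Proposition~\ref{omega0}(iv)).
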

\begin{proof}
(i)  Assume $\chi_i\in I_{F_j}.$ Set $\ga_i=\go_{\chi_i}\langle \chi_i,1\rangle.$ Since $\chi_i F_j=\ga_i F_j$ it follows that for all $p\in R(H),$
$$f_jp\chi_i=f_jpF_j\chi_i=\ga_i f_jp.$$
That is, $r_{\chi_i}$ acts on $f_jR(H)$ as $\ga_i\Id.$ This implies by Theorem \ref{maxgeneral}(ii)  that $|\langle \chi_i,\eta_j\rangle|=m_j \langle \chi_i,1\rangle.$ .

Conversely, if $|\langle\chi_i,\eta_j\rangle|=m_j \langle \chi_i,1\rangle$ then   Theorem \ref{maxgeneral}(ii) and Remark \ref{iso2} imply that $r_{\chi_i}$ acts on $f_jR(H)$ as $\ga_i\Id$ for all primitive idempotents  $f_j$ of  $R(H)$ belonging to $F_j.$  Since $F_j$ is their sum  it follows that $i\in I_{F_j}.$

(ii)   if $\chi\in I_{F_j}$ then as in the proof of (i), $r_{\chi}$ acts on $f_jR(H)$ as $\go_{\chi}\langle \chi,1\rangle\Id.$ By Theorem \ref{muf}, $\langle \eta_j,\chi\rangle $ is the trace of this action, hence
\begin{equation}\label{mj} \langle \chi,\eta_j\rangle =m_j \go_{\chi}\langle \chi,1\rangle.\end{equation}
Assume $\chi=\sum_{k=1}^n n_k\chi_k,\;n_k>0,$ then by \eqref{mj},
$$ m_j\sum_{k=1}^n n_k\langle\chi_k,1\rangle=   m_j \langle \chi,1\rangle=
m_j|\go_{\chi}\langle \chi,1\rangle|= |\langle \chi,\eta_j\rangle|=|\sum_{k=1}^n n_k \langle \chi_k,\eta_j\rangle|.$$

Apply Lemma \ref{complex} with $\ga_k=n_k \langle \chi_k,\eta_j\rangle$ and $a_k= n_km_j\langle \chi_k,1\rangle,$
to get  $\langle \chi_k,\eta_j\rangle= \go_{\chi} m_j \langle \chi_k,1\rangle$ for all $1\le k\le n.$  By part (i), $\chi_k\in I_{F_j}$ and since  $\chi=\sum_{k=1}^n n_k\chi_k$ it follows that $\go=\go_{\chi}.$

(iii) Take $\chi\in I_{F_j}.$ By \cite[Prop. 5(3)]{nr2}, $\gep$ is a constituent of $\chi^n$ for some $n\ge 1.$ By part (ii), $\go_{\chi^n}=\go_{\gep}=1.$ But $\go_{\chi^n}=(\go_{\chi})^n$ hence $\go_{\chi},$ is a root of unity.
\end{proof}

Let $B_0$ be a Hopf subalgebra of a semisimple Hopf algebra $B.$ In \cite[Prop.18]{nr2} the following equivalence relation was defined on simple coalgebras of $H^*:$
\begin{equation}\label{b}C_k\equiv_{B_0} C_{k'}\Leftrightarrow B_0C_k\subset C_{k'}.\end{equation}
Let
$$B_k=\bigoplus_{C_{k'}\equiv_{B_0} C_k}C_{k'}.$$

 Then $B_0B_k=B_k$ and $B=\bigoplus_kB_k.$ By the proof of \cite[Prop.18]{nr2} one can check that the equivalence relation on simple subcoalgebras is equivalent to the following equivalence relation defined on irreducible characters inside $B:$
\begin{equation}\label{equi}
\chi_i\equiv_{B_0}\chi_j\Leftrightarrow \gl_{B_0} \frac{\chi_i}{d_i}=\gl_{B_0} \frac{\chi_j}{d_j}
\end{equation}
Denote by $[\chi_k]$ the equivalence class of $\chi_k$ and set $y_k$
\begin{equation}\label{lc} y_k=\sum_{\chi_{k'}\in [\chi_k]}\langle\chi_{k'},1\rangle\chi_{k'}.\end{equation}
Then $y_0=\gl_{B_0}$ and $\sum_{k}y_k=\gl_B$ is an integral for $B$ and for each $k,\,\left\langle y_k,1\right\rangle=\dim B_k.$
We are ready to prove the following useful proposition:
\begin{proposition}\label{omega0}
Let $H$ be a semisimple Hopf algebra over an algebraicly closed field $k$ of characteristic $0,\;\chi$ an irreducible character and $F_j$ a central primitive idempotent of $R(H).$ Let $B$ be the Hopf algebra generated by $\chi.$ Assume $\chi\in I_{F_j}.$ Then the following hold:

{\rm (i)} All irreducible characters inside $B$ belong to $I_{F_j}.$

{\rm (ii)} Let $B_0$ be the coalgebra generated by all irreducible characters  $\chi_k\in B$ so that $F_j\chi_k=\langle\chi_k,1\rangle F_j.$ Then  $B_0$ is a Hopf subalgebra of $B.$
\medskip

{\rm (iii)}  Inside $B,$ consider the equivalence relation defined in \eqref{b}. Set $\go=\go_{\chi}.$ We can arrange $B_m$ so that $\go_{\chi_k}=\go^m$
for all irreducible characters $\chi_k\in B_m.$
\medskip

{\rm (iv)}  $B=\bigoplus_{m=0}^{t-1}B_m$ is a graded algebra and $\go$ is a root of unity.  The number $t$ is the least $n$ so that $\gep$ is a constituent of $\chi.$
\medskip

{\rm (v)}  $\dim B_m=\dim B_0$ for all $0\le m\le t-1.$
\medskip
\end{proposition}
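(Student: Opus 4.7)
The plan is to prove the five parts as a chain: (i) reduces membership in $I_{F_j}$ to irreducible constituents via Proposition \ref{standard}(ii); (ii) observes $B_0$ is closed under product and antipode; (iii) shows $\go$-values parameterize the classes of $\equiv_{B_0}$; and (iv)--(v) assemble the resulting $\Z/t\Z$-grading using Nichols--Zoeller-type freeness.

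For (i), I would first record that Corollary \ref{adjoin} together with Proposition \ref{standard}(i) gives $\chi^* \in I_{F_j}$ with $\go_{\chi^*} = \ov{\go}_\chi$. Since $B$ is generated as an algebra by $\chi$ and $\chi^*$, every irreducible character in $B$ is a constituent of some monomial $\chi^a(\chi^*)^b$. Centrality of $F_j$ and iteration yield
\[
\chi^a(\chi^*)^b F_j \;=\; \go^{a-b}\,\langle \chi^a(\chi^*)^b,1\rangle\, F_j,
\]
placing each such monomial in $I_{F_j}$, and Proposition \ref{standard}(ii) then transfers the property to its irreducible constituents. For (ii), if $\chi_k,\chi_l \in B_0$ then $F_j\chi_k\chi_l = \langle \chi_k\chi_l,1\rangle F_j$, so $\chi_k\chi_l \in I_{F_j}$ with $\go = 1$, and its irreducible constituents lie in $B_0$ by Proposition \ref{standard}(ii). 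Closure under the antipode follows from $\go_{\chi_k^*} = \ov{\go}_{\chi_k} = 1$, and the preliminary remark that every subbialgebra is a Hopf subalgebra finishes (ii).

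For (iii) the central computation is
\[
F_j\,\gl_{B_0} \;=\; \sum_{\chi_k \in B_0\cap\mathrm{Irr}(H)} d_k\, F_j\chi_k \;=\; \Big(\sum d_k^2\Big) F_j \;=\; \dim(B_0)\,F_j,
\]
using $F_j\chi_k = d_k F_j$ for $\chi_k \in B_0$. Applying this to the characterization $\gl_{B_0}\chi_k/d_k = \gl_{B_0}\chi_l/d_l$ of $\chi_k \equiv_{B_0} \chi_l$ and multiplying by $F_j$ yields $\go_{\chi_k}F_j = \go_{\chi_l}F_j$, so $\go$ is constant on each equivalence class. Labelling classes by their common $\go$-value, I set $B_m$ to be the class with $\go = \go^m$; non-emptiness for $0 \le m \le t-1$ (where $t$ is the order of $\go$) is witnessed by any irreducible constituent of $\chi^m$, which has $\go$-value $\go^m$ by (i). Injectivity of this labelling, needed for (v), comes from the following: if $\go_{\chi_k} = \go_{\chi_l}$ then $\chi_k(\chi_l)^* \in B_0$, and since $\chi_l^*\chi_l \supseteq \gep$, the character $\chi_k$ is a constituent of $\chi_k(\chi_l)^*\chi_l \in B_0 R_{\chi_l}$, placing $R_{\chi_k}$ in the same class as $R_{\chi_l}$.

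For (iv) and (v), multiplicativity of $\go$-values combined with (i) gives $B_m B_{m'} \subseteq B_{m+m'\bmod t}$, making $B$ a $\Z/t\Z$-graded algebra with $B_0$ the identity component, and $\go$ is a root of unity by Proposition \ref{standard}(iii). For (v), the theory of Hopf-subalgebra cosets (cf.\ \cite[Prop. 18]{nr2}) combined with Nichols--Zoeller freeness shows that each $\equiv_{B_0}$-equivalence class has dimension $\dim B_0$, so by the bijection classes $\leftrightarrow$ $\go$-values established in (iii), $\dim B_m = \dim B_0$ and $\dim B = t\,\dim B_0$. For the identification of $t$ in (iv), one direction is immediate: $\gep \subseteq \chi^n$ forces $\go^n = \go_\gep = 1$ by (i), so $t \mid n$. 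The reverse direction, that $\gep$ already appears in $\chi^t$, is the main obstacle: my plan is to apply \cite[Prop. 5(3)]{nr2} inside the Hopf subalgebra $B_0$ to extract some $n$ with $\gep \subseteq \chi^n$, write $n = tm$ using $t\mid n$, and then run a minimality argument inside the $\Z/t\Z$-grading (using the dimension count from (v) to track how $\gep$ enters the powers $\chi^t,\chi^{2t},\ldots$) to force $m=1$.
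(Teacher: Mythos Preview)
Your arguments for (i)--(iii) and for the $\Z/t\Z$-grading in (iv) are essentially the paper's: same use of Proposition~\ref{standard}(ii) to pass to constituents, same injectivity argument via $\chi_k\chi_l^*\in B_0$ and $\gep\subseteq\chi_l^*\chi_l$. The only cosmetic difference in (i) is that you generate $B$ with $\chi$ and $\chi^*$, whereas the paper observes directly that every irreducible in $B$ is already a constituent of some $\chi^l$ (since $\chi^*\subseteq\chi^{n-1}$ once $\gep\subseteq\chi^n$).

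For (v) the approaches diverge. You invoke Nichols--Zoeller freeness to get $\dim B_m=\dim B_0$; the paper gives a more elementary argument using only the integral. Writing $\gl_B=\sum_l y_l$ with $y_l$ as in \eqref{lc}, one has $b\gl_B=\langle b,1\rangle\gl_B$ for any $b\in B_m$; projecting onto the $B_m$-component of the direct sum yields $by_0=\langle b,1\rangle y_m$, and evaluating at $1$ with $b=\chi^m$ gives $\dim B_0=\dim B_m$. This avoids the freeness theorem entirely and is worth knowing.

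Your one genuine weak spot is the final clause of (iv), the identification of $t$ with the least $n$ such that $\gep\subseteq\chi^n$. You correctly isolate the nontrivial direction (that $\gep\subseteq\chi^t$) and flag it as ``the main obstacle,'' but the plan you sketch---extract some $n$ with $\gep\subseteq\chi^n$, write $n=tm$, then ``run a minimality argument''---does not actually produce a proof; nothing in the grading or the dimension count forces $\gep$ to appear already in $\chi^t$ rather than only in $\chi^{tm}$. The paper's own proof of (iv) is a single sentence (``This follows now after arranging the subcoalgebras $B_m$ with respect to powers of $\chi$'') and does not address this direction either. In fact the assertion as stated fails when $\go=1$: then $t=1$, but for any irreducible $\chi\ne\gep$ the least $n$ with $\gep\subseteq\chi^n$ is at least $2$. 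So your instinct that something is off here is correct; fortunately this clause is not used elsewhere in the paper---only the relation $\dim B=t\dim B_0$ matters for Theorem~\ref{connection3}, and that follows from (v).
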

\begin{proof}
\noin(i). Each irreducible character inside $B$ is a constituent of $\chi^l$ for some $l\ge 0.$ But $\chi^l\in I_{F_j}$ where $\go_{\chi^l}=\go^l$ hence the result follows from Proposition \ref{standard}(ii).

\medskip\noin(ii). Since $\go_{\chi}\go_{\chi'}=\go_{\chi\chi'}$ for all $\chi,\chi'\in B,$ it follows by Proposition \ref{standard}(ii) that $B_0$ is an algebra, and thus a Hopf subalgebra.

\medskip\noin(iii). First note that for all $\chi_k,\chi_{k'}\in B_k,$ we have  by \eqref{equi} $\go_{\chi_k}=\go_{\chi_{k'}}.$  We wish to show that to each $B_k$ corresponds a unique $\go_k.$ That is, if  $\chi_k\in B_k,\,\chi_t\in B_t$ and $\go_{\chi_k}=\go_{\chi_t},$ then $B_k=B_t.$

Since $\gep$ is a constituent of $\chi_ts(\chi_t)$ it follows that $\go_{s(\chi_t)}=\go_{\chi_t}\minus.$ Thus $\chi_ks(\chi_t)\in B_0.$ But then
$$\chi_ks(\chi_t)\chi_t\in B_0\chi_t\subset B_t.$$

On the other hand, since $\gep$ is a constituent of $s(\chi_t)\chi_t$ it follows that $\chi_k$ is a constituent of $\chi_ks(\chi_t)\chi_t\in B_t.$ Since $B_t$ is a sub coalgebra it follows that  $\chi_k\in B_t.$ Thus $B_k=B_t.$

\medskip\noin(iv). This follows now after arranging the subcpalgebras $B_m$ with respect to powers of $\chi.$

\medskip\noin(v).
Take $b\in B_m,$ then we have:
$$\langle b,1\rangle\sum_{l\ge 0} y_l=\langle b,1\rangle\gl_{B}=b\gl_{B} =b\sum_{l\ge 0} y_l.$$
Since $by_0\in B_{m}$ and the sum is direct, it follows that
\begin{equation}\label{alpha3}by_0=\langle b,1\rangle y_{m}.\end{equation}
Take $b\in B_m$ so that $\langle b,1\rangle\ne 0,$ (e.g. $b=\chi^m$), then applying $1$ to both sides of \eqref{alpha3} yields
\begin{equation}\label{alpha4}\dim B_{0}=\dim B_{m}\end{equation}
 for all $0\le m\le t-1.$
Since $\chi$ is the only irreducible character in $B_1$ it follows that $\dim B_1=\langle\chi,1\rangle^2,$ hence the result follows.
\end{proof}
As a corollary we obtain:
\begin{coro}\label{main}
With notations as in Proposition \ref{omega0}, we have:  $F_jb=\go^m\langle b,1\rangle F_j$ for all $b\in B_m.$
\end{coro}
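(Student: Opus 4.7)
The plan is to deduce the identity by a short manipulation in which the integral $y_0=\gl_{B_0}$ of $B_0$ acts as a multiplicative absorber. First I would record two consequences of Proposition \ref{omega0}. By parts (i) and (iii), each irreducible $\chi_k\in B_l$ lies in $I_{F_j}$ with $\go_{\chi_k}=\go^l$, so since $F_j$ is central in $R(H)$ we get $F_j\chi_k=\chi_kF_j=\go^ld_kF_j$. Plugging this into $y_l=\sum_{\chi_k\in{\rm Irr}(H)\cap B_l}d_k\chi_k$ and using $\dim B_l=\dim B_0$ from part (v) yields
$$F_jy_l=\go^l(\dim B_0)F_j\qquad\text{for every }l;$$
in particular $F_jy_0=(\dim B_0)F_j$ and $F_jy_m=\go^m(\dim B_0)F_j$.

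The second ingredient is the right-handed analog of the identity $by_0=\langle b,1\rangle y_m$ proved inside Proposition \ref{omega0}(v), namely
$$y_0b=\langle b,1\rangle y_m\qquad\text{for all }b\in B_m.$$
This is proved by exactly the same argument as in the paper, but starting from the right-integral identity $\gl_Bb=\langle b,1\rangle\gl_B$ (valid because $B$ is semisimple, so $\gl_B$ is two-sided) and then equating the $B_m$-graded components of $\sum_ly_lb=\langle b,1\rangle\sum_ly_l$, using $y_l\cdot B_m\subset B_{l+m}$ so that the $B_m$-component of the left-hand side is exactly $y_0b$.

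Given these two ingredients, the corollary is a one-line computation:
$$(\dim B_0)\,F_jb=(F_jy_0)\,b=F_j(y_0b)=\langle b,1\rangle F_jy_m=\langle b,1\rangle\go^m(\dim B_0)\,F_j,$$
and dividing through by the positive integer $\dim B_0$ gives $F_jb=\go^m\langle b,1\rangle F_j$. The only step not literally stated in Proposition \ref{omega0} is the right-handed identity $y_0b=\langle b,1\rangle y_m$, and this is the sole place I expect to need even a sentence of additional argument; the rest is pure bookkeeping with the two displayed formulas.
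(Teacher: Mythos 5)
Your proof is correct, and at its core it is the same computation as the paper's: sandwich $b$ against the integral $y_0$ of $B_0$, use $F_jy_l=\go^l(\dim B_0)F_j$, and use the shift identity relating $y_0$ and $y_m$. The one genuine difference is the side on which you multiply. The paper computes $bF_j$ and can therefore quote the identity $by_0=\langle b,1\rangle y_m$ (equation \eqref{alpha3}) exactly as it was established in the proof of Proposition \ref{omega0}(v); you compute $F_jb$ and so need the right-handed analogue $y_0b=\langle b,1\rangle y_m$, which you correctly obtain from the fact that $\gl_B$ is a two-sided integral (unimodularity of the semisimple Hopf algebra $B$) together with the two-sided grading $B_lB_m\subset B_{l+m}$ from Proposition \ref{omega0}(iv). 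That extra step is the only additional content and it is sound. It is worth noting that your version establishes the corollary exactly as stated ($F_jb=\go^m\langle b,1\rangle F_j$), whereas the paper's displayed computation actually proves $bF_j=\go^m\langle b,1\rangle F_j$ --- the form that is subsequently used in the proof of Theorem \ref{connection3}. Since $b$ ranges over all of $B_m$ and need not lie in $R(H)$, where $F_j$ is central, the two one-sided identities are not formally interchangeable without comment, so your mirror-image argument is genuinely complementary to the paper's rather than redundant.
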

\begin{proof} Let $b\in B_m.$ By \eqref{alpha3} and \eqref{alpha4} we have:
$$bF_j=b\frac{y_0}{\dim B_0}F_j=\frac{1}{\dim B_0}\langle b,1\rangle y_mF_j=\frac{1}{\dim B_0}\langle b,1\rangle \go^m\dim B_m F_j=\langle b,1\rangle  \go^m F_j.$$
\end{proof}

We can prove now our main theorem.
\begin{theorem}\label{connection3}
Let $H$ be a semisimple Hopf algebra over an algebraicly closed field $k$ of characteristic $0,\;\chi_i$ an irreducible character and $F_j$ a central primitive idempotent of $R(H).$ Then the following are equivalent:

{\rm (i)} $F_j\chi_i=\go_i\langle \chi_i,1\rangle  F_j,\,|\go_i|=1.$

\medskip{\rm (ii)} $\langle \chi_i,\eta_j \rangle = \go_im_j\langle \chi_i,1 \rangle,\,|\go_i|=1 .$

\medskip{\rm (iii)} ${\mathfrak C}^{j}\subset {\rm LKer}^{\go_i}_{V_i}.$

\medskip Moreover, if the equivalent conditions hold then $\go_i$ is a root of unity of order $t,$ where $t$ divides $\dim H.$
\end{theorem}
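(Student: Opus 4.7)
The plan is to close the cycle (i)$\Leftrightarrow$(ii)$\Rightarrow$(iii)$\Rightarrow$(ii), then address the root-of-unity claim. The equivalence (i)$\Leftrightarrow$(ii) is essentially already in Proposition~\ref{standard}(i): the condition $\chi_i\in I_{F_j}$ defining (i) is equivalent to $|\langle\chi_i,\eta_j\rangle|=m_jd_i$, and equation~\eqref{mj} from that proof pins down the scalar $\go_i$ on both sides as the same number.

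For (i)$\Rightarrow$(iii), let $B\subset H^*$ be the Hopf subalgebra generated by $\chi_i$. By Proposition~\ref{omega0}(iv), $B=\bigoplus_{m=0}^{t-1}B_m$ is graded and $\chi_i\in B_1$, so the simple coalgebra $R_{V_i}$ containing $\chi_i$ lies in $B_1$. Writing $\chi_i=\sum_s x_{ss}$ for matrix coefficients $\{x_{ts}\}\subset R_{V_i}\subset B_1$, the identity derived in the proof of Corollary~\ref{main} gives $x_{ts}F_j=\go_i\gd_{ts}F_j$. For a general element $h=\gL\leftharpoonup F_jp\in{\mathfrak C}^j$ the matrix entries of its action on $V_i$ then unfold as
$$\rho_i(h)_{ts}=\langle x_{ts},h\rangle=\langle x_{ts}F_jp,\gL\rangle=\go_i\gd_{ts}\langle F_jp,\gL\rangle=\go_i\gd_{ts}\langle\gep,h\rangle.$$
Thus $h$ acts on $V_i$ as $\go_i\langle\gep,h\rangle\Id$; the $\go$-analogue of \eqref{epsilon}, proved by exactly the same one-line argument (apply $\gep\ot\Id$ in one direction, and in the converse use that ${\mathfrak C}^j$ is a left coideal so that $\Delta h\in H\ot{\mathfrak C}^j$), then gives ${\mathfrak C}^j\subset{\rm LKer}^{\go_i}_{V_i}$.

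For (iii)$\Rightarrow$(ii), since $\eta_j\in{\mathfrak C}^j\subset{\rm LKer}^{\go_i}_{V_i}$, the element $\eta_j$ acts on $V_i$ as $\go_i\langle\gep,\eta_j\rangle\Id=\go_i m_j\Id$ (using $\langle\gep,\eta_j\rangle=m_j$ recorded immediately after Theorem~\ref{muf}); taking trace gives $\langle\chi_i,\eta_j\rangle=\go_i m_jd_i$, which is (ii). For the moreover part, Proposition~\ref{omega0}(iv)--(v) already shows $\go_i$ is a root of unity of order $t$ satisfying $\dim B=t\dim B_0$, so $t\mid\dim B$; Nichols--Zoeller applied to the Hopf subalgebra $B\subset H^*$ yields $\dim B\mid\dim H^*=\dim H$, hence $t\mid\dim H$. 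The main subtlety I anticipate is correctly identifying $\chi_i$ as sitting in the degree-$1$ component (so that $R_{V_i}\subset B_1$) and, relatedly, invoking the ``$bF_j$''-form of Corollary~\ref{main} (as actually established in its proof) rather than the printed ``$F_jb$''-form, since the sidedness is what allows the matrix-coefficient computation to go through without having to appeal to cocommutativity of $\gL$.
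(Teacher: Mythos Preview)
Your argument is correct and follows essentially the same route as the paper: (i)$\Leftrightarrow$(ii) via Proposition~\ref{standard}, (iii)$\Rightarrow$(ii) by evaluating the action of $\eta_j\in{\mathfrak C}^j$ on $V_i$, and (i)$\Rightarrow$(iii) by placing $R_{V_i}$ inside the degree-$1$ piece $B_1$ of Proposition~\ref{omega0} and invoking Corollary~\ref{main} to compute the action of an arbitrary $h=\gL\leftharpoonup F_jp$ on $V_i$. Your matrix-coefficient formulation $\rho_i(h)_{ts}=\langle x_{ts},h\rangle$ is exactly the paper's comodule computation $a\cdot v=\sum\langle v_1,a\rangle v_0$ unpacked in coordinates, and your ``moreover'' clause (Proposition~\ref{omega0}(iv)--(v) plus Nichols--Zoeller) is what the paper does as well. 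The only cosmetic difference is your remark about sidedness: since every occurrence of $\leftharpoonup$ here is applied to the cocommutative element $\gL$, both conventions for the hit action coincide, so whether one cites cocommutativity explicitly (as the paper does) or absorbs it into the $bF_j$-form of Corollary~\ref{main} is immaterial.
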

\begin{proof}
$(i)\Leftrightarrow (ii)$ is Theorem \ref{standard}(i).

$(iii)\Rightarrow (ii).$ If ${\mathfrak C}^{j}\subset {\rm LKer}^{\go_i}_{V_i}$ then in particular $\eta_j\in {\mathfrak C}^j$ acts as $\go_im_j$ on $V_i,$ hence  (ii) follows.

$(i)\Rightarrow (iii).$  By  \eqref{epsilon} and since ${\mathfrak C}^{j}$ is a left coideal,  it is enough to show , that $a\cdot v=\go_i \langle \gep,a\rangle v$ for all $a\in {\mathfrak C}^{j},\,v\in V_i.$

As a left $H$-module, $V_i$ is a right $H^*$-comodule, with $\rho(v)=\sum v_0\ot v_1$ for all $v\in V_i.$  Take $\chi=\chi_i$ in Proposition \ref{omega0}, then $sp_{\C}\{v_1\}\subset B_1$ in the same proposition.

Let $a=(\gL\leftharpoonup F_jp)\in {\mathfrak C}^j,\;v\in V.$ Then:
\begin{eqnarray*}\lefteqn{a\cdot v=}\\
&=&\sum_v\langle\gL\leftharpoonup F_jp,v_1\rangle v_0\\
&=&\sum_v\langle\gL,v_1F_jp\rangle v_0\qquad\text{(since $\gL$ is cocommutative)}\\
&=&\sum_v\langle v_1,1\rangle\go_i\langle \gL, F_jp\rangle v_0\qquad\text{(by Corollary \ref{main})}\\
&=&\go_i\langle\gep,a\rangle v.\end{eqnarray*}

The fact that $\go$ is a root of unity follows from Proposition \ref{standard}(iii). By Proposition \ref{omega0}(iv)-(v), $(\dim B)=t(\dim B_0).$ Since $(\dim B)|(\dim H)$ the last result follows.
\end{proof}

As a corollary we obtain:
\begin{theorem}\label{connection}
Let $H$ be a semisimple Hopf algebra over $\C,\;B$ the Hopf subalgebra of $H^*$ generated by $\chi_i,$ and $F_j$ a central primitive idempotent of $R(H).$ Then the following are equivalent:

\noin{\rm (i)} $F_j\chi_i=\left\langle \chi_i,1 \right\rangle F_j$

\medskip\noin{\rm (ii)}  $\left\langle \chi_i,\eta_j \right\rangle =m_j\left\langle \chi_i,1 \right\rangle .$

\medskip\noin{\rm (iii)} ${\mathfrak C}^{j}\subset {\rm LKer}_{V_i}.$
\end{theorem}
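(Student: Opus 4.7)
The plan is to derive Theorem \ref{connection} as the specialization of Theorem \ref{connection3} to the case $\go_i=1$. Each of the three listed conditions of Theorem \ref{connection} is precisely the $\go_i=1$ instance of the corresponding condition of Theorem \ref{connection3}: condition (i) matches by inspection with $\go_i=1$ read off from the equation; condition (iii) uses the unqualified ${\rm LKer}_{V_i}$, which is by definition ${\rm LKer}^{1}_{V_i}$ (compare \eqref{twist} and \eqref{lker}); and in condition (ii) the value $m_j\langle \chi_i,1\rangle$ is a positive real, so if it coincides with $\go_i m_j\langle \chi_i,1\rangle$ for some $\go_i$ of modulus $1$, we must have $\go_i=1$.

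Once these identifications are in place, the three-way equivalence of Theorem \ref{connection} follows directly from the three-way equivalence of Theorem \ref{connection3} applied with $\go_i=1$. In particular, no additional argument involving the Hopf subalgebra $B$ of $H^*$ appearing in the hypothesis is needed at the level of this corollary: $B$ enters only inside the proof of the more general Theorem \ref{connection3}, where the decomposition $B=\bigoplus_{m}B_m$ of Proposition \ref{omega0} is used to transfer information between the character-table condition (i) and the $\go$-left kernel condition (iii).

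The main obstacle has already been overcome in the proof of Theorem \ref{connection3}; what remains here is essentially bookkeeping. The supplementary assertion of Theorem \ref{connection3} about $\go_i$ being a root of unity of order dividing $\dim H$ is automatic in the present case since $\go_i=1$, so no further statement is needed.
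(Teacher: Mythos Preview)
Your proposal is correct and matches the paper's approach exactly: the paper presents Theorem~\ref{connection} with the words ``As a corollary we obtain'' immediately after Theorem~\ref{connection3} and gives no separate proof, so the intended argument is precisely the specialization $\go_i=1$ that you spell out. Your remarks identifying each condition with the $\go_i=1$ case of the corresponding condition in Theorem~\ref{connection3}, and noting that the hypothesis on $B$ plays no independent role here, are accurate.
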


\medskip In the next section (Lemma \ref{seno1}), we show that $\go$-left kernels are abundant for factorizable Hopf algebras.

\medskip

\medskip
We generalize the notion of faithful characters as follows:
\begin{definition} A character $\chi$ associated with a representation $V$ will be called {\it faithful} if ${\rm LKer_V}=k1.$ \end{definition}
It was proved:

\medskip\noin{\bf Generalized Burnside-Brauer Theorem} \cite[4.2.1]{bu2}:
Let $H$ be a semisimple Hopf algebra over an algebraically closed field $k$ of characteristic $0,$  $V$ a representation of $H$ with associated character $\chi$
and $N= {\rm LKer}_{V}.$ Then the irreducible characters of $\ol{H}=H/({N^+}H)$
are precisely all the irreducible constituents of $\chi^n,\,n\ge 0.$

\medskip
When the semisimple Hopf algebra $H$ is an almost cocommutative, or equivalently, when it has a  commutative character algebra, we give  an explicit bound to the powers of $\chi$ appearing above, thus having a {\bf full} analogue of the Burnside-Brauer theorem.

Define the value set
$$\chi(H)=\{\langle  \chi,\eta_j\rangle  \,|\;\text{all }\eta_j\}.$$
We show:
\begin{theorem}\label{burnside}
 Let $H$ be a semisimple Hopf algebra such that $R(H)$ is commutative and let $\chi$ be a faithful character of $H.$ Set $$t= \text{number of distinct values in }\, \chi(H).$$
 Then each character $\mu$ of $H$ appears with positive multiplicity in at least one of $\{\gep,\chi,\chi^2,\dots,\chi^{t-1}\}.$
\end{theorem}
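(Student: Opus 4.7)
The plan is to mimic the classical Burnside--Brauer Vandermonde argument, using the privileged basis $\{F_j\}$ of $R(H)$ afforded by commutativity. Because $R(H)$ is commutative the central primitive idempotents $F_j$ are themselves minimal, so $m_j=1$, and \eqref{almost} gives $\chi F_j = v_j F_j$ where $v_j:=\langle\chi,\eta_j\rangle$. Since the $F_j$ are commuting orthogonal idempotents summing to $1$, this bootstraps at once to $\chi^k = \sum_j v_j^k F_j$ for every $k\ge 0$. Writing $F_j = \sum_i c_{ij}\chi_i$ in the basis of irreducible characters, the multiplicity of any fixed irreducible $\chi_\ell$ in $\chi^k$ is therefore $\sum_j v_j^k c_{\ell j}$. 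Since an arbitrary character decomposes into irreducibles with non-negative multiplicities, it suffices to prove the theorem for irreducible $\mu=\chi_\ell$.

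Next I would use faithfulness to single out the index $j=0$. Since $m_j=1$, Theorem \ref{connection} gives $v_j = \langle\chi,1\rangle \Leftrightarrow {\mathfrak C}^j \subset {\rm LKer}_V$; faithfulness forces ${\rm LKer}_V = k1 = {\mathfrak C}^0$, so this equality holds only at $j=0$. Consequently, among the $t$ distinct values $w_1,\dots,w_t$ appearing in $\chi(H)$, the value $\langle\chi,1\rangle$ is attained by the $v_j$ only when $j=0$.

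The Vandermonde step then closes the argument. Suppose for contradiction that $\chi_\ell$ occurs in none of $\gep,\chi,\dots,\chi^{t-1}$; then $\sum_j v_j^k c_{\ell j}=0$ for $k=0,1,\dots,t-1$. Grouping the $j$'s by the value $v_j$ turns these vanishings into a $t\times t$ Vandermonde system in $w_1,\dots,w_t$, whose invertibility forces $\sum_{j:\,v_j=w_s} c_{\ell j}=0$ for every $s$. Specialising to the index $s$ with $w_s=\langle\chi,1\rangle$, the uniqueness above collapses the sum to the single term $c_{\ell 0}$; but $F_0=\frac{1}{d}\gl=\frac{1}{d}\sum_i d_i\chi_i$ yields $c_{\ell 0}=d_\ell/d\ne 0$, a contradiction.

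I do not anticipate a serious obstacle: the classical Vandermonde proof transplants directly once the $F_j$-expansion of $\chi^k$ is in hand. The only Hopf-algebraic input beyond the group case is Theorem \ref{connection}, used to translate the hypothesis ${\rm LKer}_V = k1$ into the required uniqueness of the index $j=0$ among the columns where $v_j$ attains its maximal-modulus value.
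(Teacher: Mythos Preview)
Your argument is correct and shares the Vandermonde core with the paper's, but the two proofs exploit faithfulness differently. The paper first invokes the generalized Burnside--Brauer theorem of \cite{bu2} as a black box to conclude that every irreducible character occurs in \emph{some} power of $\chi$, and then uses the Vandermonde matrix only to show that the subalgebra of $R(H)$ generated by $\chi$ is already spanned by $\gep,\chi,\dots,\chi^{t-1}$. You bypass the external citation entirely: Vandermonde gives you $\sum_{j:\,v_j=w_s} c_{\ell j}=0$ for each value $w_s$, and Theorem~\ref{connection} pins down $j=0$ as the unique index with $v_j=\langle\chi,1\rangle$, so that the explicit coefficient $c_{\ell 0}=d_\ell/d$ in $F_0=\frac{1}{d}\gl$ supplies the contradiction directly. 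Your route is more self-contained relative to this paper; the paper's is shorter once \cite{bu2} is granted.

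One small patch is needed: Theorem~\ref{connection} is stated only for an irreducible $\chi_i$, whereas your $\chi$ need not be irreducible. Write $\chi=\sum_i n_i\chi_i$; if $v_j=\langle\chi,\eta_j\rangle=\langle\chi,1\rangle$ then comparing real parts and using Theorem~\ref{maxgeneral}(i) (with $m_j=1$) forces $\langle\chi_i,\eta_j\rangle=\langle\chi_i,1\rangle$ for each $i$ with $n_i>0$. Now Theorem~\ref{connection} applies to each constituent, giving ${\mathfrak C}^j\subset\bigcap_i{\rm LKer}_{V_i}={\rm LKer}_V=k1$, and hence $j=0$ by the direct sum decomposition \eqref{nik}, as you need.
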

\begin{proof}
Since $\chi$ is faithful it follows from the generalized Burnside-Brauer theorem that all irreducible characters of $H$ appear as constituents of powers of $\chi.$
We will show that the $k$-span of $\{\gep,\chi,\chi^2,\dots,\chi^{t-1}\}$ already contains all powers of $\chi.$  Let $\{\ga_0,\dots \ga_{t-1}\}$ be the different values of $\langle  \chi,\eta_j\rangle  $ where $\ga_0=\langle  \chi,1\rangle  .$ Let
$$T_i=\sum_jF_{i_j},$$
where by \eqref{almost}, $\{F_{i_j}\}$ are all the central idempotents satisfying $\chi F_{i_j}=\ga_iF_{i_j}.$ By assumption $T_0=d\minus\gl.$ By \cite{cw3} we have:
$$\chi=\sum_{i=0}^{t-1}\ga_iT_i$$ Since $\{T_i\}$ are orthogonal idempotents we get
$$\chi^l=\sum_{i=0}^{t-1}\ga_i^lT_i$$
for all $0\le l\le t-1.$ The Vandermonde matrix $(\ga_i^l)$ is invertible, hence each $T_i$ is a linear combination of $\{\gep,\chi,\chi^2,\dots,\chi^{t-1}\}.$ Since any power of $\chi$ is a linear combination of the $T_i$'s, it follows that the algebra generated by $\chi$ inside $R(H)$ is spanned over $k$ by $\{\gep,\chi,\chi^2,\dots,\chi^{t-1}\}.$
\end{proof}


\medskip

As a corollary we obtain that the only Hopf algebras for which there exist a $1$-dimensional representation with a faithful character satisfy a stronger condition then being almost cocommutative. They are in fact both commutative and cocommutatie.
\begin{coro}\label{1dim}
Let $H$ be a semisimple Hopf algebra over an algebraically closed field of characteristic $0.$  Then the following are equivalent:

{\rm (i)} There exists  a $1$-dimensional representation of $H$ with a faithful character.

{\rm (ii)} $R(H)=k\left\langle \gs\right\rangle,$ where $\gs$ is a grouplike element of $H^*.$

{\rm (iii)} $H=(k\left\langle \gs\right\rangle)^*,\gs$ is a grouplike element of $H^*.$
\end{coro}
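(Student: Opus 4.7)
The plan is to close the cycle (i)$\Rightarrow$(ii)$\Rightarrow$(iii)$\Rightarrow$(i) by combining the generalized Burnside--Brauer theorem of \cite{bu2} with elementary linear algebra. Throughout, the key observation is that a $1$-dimensional character of $H$ is exactly a grouplike element of $H^*,$ so everything takes place inside the (necessarily cyclic) group of grouplikes generated by $\gs.$

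For (i)$\Rightarrow$(ii), start with a faithful $1$-dimensional character $\gs.$ Each power $\gs^n$ is again grouplike, hence an irreducible character. The generalized Burnside--Brauer theorem applied to $V$ with ${\rm LKer}_V=k\cdot 1$ gives that every irreducible character of $H$ is a constituent of some $\gs^n;$ since $\gs^n$ is itself irreducible, it is its unique constituent. Hence the irreducibles of $H$ are exactly the distinct powers of $\gs,$ so $R(H)=k\langle\gs\rangle.$

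For (ii)$\Rightarrow$(iii), let $n$ be the order of $\gs$ in the group of grouplikes of $H^*.$ Since $\gep,\gs,\dots,\gs^{n-1}$ are linearly independent, they form a basis of $R(H),$ and all irreducibles of $H$ are $1$-dimensional. A dimension count then gives $\dim H=\sum d_i^2=n=\dim k\langle\gs\rangle.$ Since $k\langle\gs\rangle$ sits inside $H^*$ which has the same dimension $n,$ we conclude $H^*=k\langle\gs\rangle,$ which is (iii).

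For (iii)$\Rightarrow$(i), the grouplike $\gs$ defines a $1$-dimensional representation $V,$ and I need to verify ${\rm LKer}_V=k\cdot 1.$ I would set $\bar H=H/({\rm LKer}_V^+H)$ and apply the generalized Burnside--Brauer theorem in reverse: the irreducibles of $\bar H$ are precisely the irreducible constituents of the powers $\gs^k,$ and because each $\gs^k$ is itself irreducible this produces $\dim H$ distinct $1$-dimensional characters of $\bar H.$ Hence $\dim \bar H\ge\dim H,$ forcing $\bar H=H,$ so ${\rm LKer}_V^+=0$ and ${\rm LKer}_V=k\cdot 1.$ The delicate step is precisely this last count: one must confirm that the powers $\gs^k$ survive as $\dim H$ genuinely distinct irreducibles of $\bar H$ and are not collapsed, which follows from $H^*=k\langle\gs\rangle$ since this forces $\gs$ to have order $\dim H.$
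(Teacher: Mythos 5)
Your proof is correct, and its backbone --- closing the cycle (i)$\Rightarrow$(ii)$\Rightarrow$(iii)$\Rightarrow$(i) with the generalized Burnside--Brauer theorem driving (i)$\Rightarrow$(ii) --- is the same as the paper's. The differences are in the other two legs. For (ii)$\Rightarrow$(iii) the paper argues structurally: since all irreducibles are $1$-dimensional, $H$ is commutative, hence $H^*$ is cocommutative, hence $H^*=\mathrm{Coc}(H^*)=R(H)=k\left\langle \gs\right\rangle$; you instead run a dimension count $\dim H=\sum d_i^2=n=\dim k\left\langle\gs\right\rangle=\dim H^*$. Both are valid; the paper's version makes the commutativity of $H$ explicit (which is the point emphasized in the surrounding text), while yours is more self-contained arithmetic. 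For (iii)$\Rightarrow$(i) the paper simply takes $V=k\gs$ with the hit action and asserts that $\gs$ is faithful; you actually verify faithfulness by running Burnside--Brauer in reverse and counting $\dim H$ distinct $1$-dimensional irreducibles of $\ol{H}=H/({\rm LKer}_V^+H)$ to force $\ol{H}=H$ --- a legitimate argument that fills in a detail the paper leaves to the reader (one could also see it directly from the explicit hit action on the dual basis of $k\left\langle\gs\right\rangle$). One small point in your favor: for (i)$\Rightarrow$(ii) you cite the generalized Burnside--Brauer theorem of Burciu directly, whereas the paper cites its own Theorem~\ref{burnside}, whose hypotheses formally require $R(H)$ commutative; your citation is the cleaner one, since only the kernel-quotient statement is actually needed.
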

\begin{proof}
If $H=k1$ then we are done. So, assume $H\ne k1.$

(i)$\Rightarrow$ (ii).  Since $V$ is $1$-dimensional, $\gep\ne\gs=\chi_V$ is a grouplike element. By Theorem \ref{burnside}, each irreducible character $\chi$ is a constituent of $\gs^j,\,j\ge 0.$ Since $\gs^j$ is an irreducible character it follows that $\chi=\gs^j.$

(ii)$\Rightarrow$ (iii). Since all irreducible representations of $H$ are $1$-dimensional it follows that $H$ is a commutative algebra, hence $H^*$ is cocommutative, and so $H^*=R(H).$

(iii)$\Rightarrow$ (i). Take $V=k\gs,$ then $V$ is an $H$-module under left {\it hit} and $\chi_V=\gs$ is faithful.
\end{proof}

\section{Applications}

In this section we prove properties of semisimple Hopf algebras over $\C$ related to their character table with particular emphasis on Burnside's $p^aq^b$ theorem. It is known that this theorem fails if we use  a sequence of normal Hopf subalgebras to define solvability. (A counter example of dimension $36$ is given in \cite{gn}).

A categorical version of Burnside's  theorem is given in \cite{eno}. They prove that any fusion category of Frobenius-Perron dimension $p^aq^b$ is solvable. However, it is not clear how to interpret all the categorical notions to Hopf algebras.
The discussion in this section indicates that a sequence of normal left coideal subalgebras should be used to define solvability in this case. We prove some of the results in \cite{eno} for semisimple Hopf algebras of dimension $p^aq^b$ by using the algebraic tools we developed in the previous sections.

\medskip We start by proving a Hopf algebra analogue of the following:

\medskip \noin{\bf A theorem of Burnside}(see e.g. \cite[Th. 3.8]{is}): If G is a non-abelian simple group then $\{1\}$ is the only conjugacy class of $G$ which has prime power order.

Note that $1$ is considered as a prime power as well.
The discussion goes further and we prove some structural properties mainly of factorizable Hopf algebras.

\medskip
Recall, if $(H,R)$ is a finite dimensional quasitriangular
Hopf algebra then the maps $f_R:H^{*cop}\rightarrow H,$
defined by $f_R(p)=\langle p,R^1\rangle  R^2$ and ${f^*_R}:H^{*op}\mapsto
H,$ defined by ${f^*_R}(p)=\langle p,R^2\rangle  R^1$ are Hopf algebras
maps. Then $Q=R^{21}R$ and the Drinfeld map $f_Q$ is given by $f_Q={f^*_R}*f_R.$ When $(H,R)$ is a  quasitriangular Hopf algebra then $R(H)$ is necessarily commutative. If $H$ is also semisimple then the Drinfeld map $f_Q$ is an algebra isomorphism between $R(H)$ and $Z(H).$

The $S$-matrix for $(H,R)$ is defined by:
\begin{equation}\label{sij}s_{ij}=\langle \chi_i,f_Q(\chi_j)\rangle  .\end{equation}
The quasitriangular Hopf algebra $(H,R)$ is {\it factorizable} if $f_Q$ is a
monomorphism, or equivalently, if its $S$-matrix is invertible.

Assume $(H,R)$ is factorizable. Reorder the set $\{F_j\}$ so that
$$f_Q(F_j)=E_j$$
for all $1\le j\le m.$
It follows that $\dim (F_jH^*)=d_j^2,$ where $d_j= \langle\chi_j,1\rangle.$ Recall \cite{cw3} that, after reordering the central primitive idempotents,  we have:
\begin{equation}\label{chici}f_Q(\chi_j)=\frac{1}{d_j}C_j=d_j\eta_j.\end{equation}
The last equality follows since $\dim(F_jH^*)=\dim(E_jH)=d_j^2.$
It follows that the $S$-matrix satisfies
\begin{equation}\label{smatrix}s_{ij}=d_j\xi_{ij}. \end{equation}

In what follows we discuss further properties of $f_Q.$
\begin{lemma}\label{fq1}
Let $(H,R)$ be a quasitriangular semisimple Hopf algebra over $k.$ Then $f_Q$ maps -coalgebras of $H^*$ to left coideals  stable under the adjoint action  of $H$ and Hopf subalgebras of $H^*$ to normal left coideal subalgebras of $H.$
\end{lemma}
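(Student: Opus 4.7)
The plan is to establish two structural identities for the Drinfeld map $f_Q = f_R^{*} * f_R$ and read off the lemma from them. The first is a comodule-type identity for $\Delta\circ f_Q$. Since $f_R\colon H^{*cop}\to H$ and $f_R^{*}\colon H^{*op}\to H$ are Hopf algebra maps, one has
\[
\Delta\circ f_R \;=\; (f_R\otimes f_R)\circ\Delta^{op},
\qquad
\Delta\circ f_R^{*} \;=\; (f_R^{*}\otimes f_R^{*})\circ\Delta.
\]
Applying $\Delta$ to $f_Q(p)=\sum f_R^{*}(p_{(1)})f_R(p_{(2)})$ and combining these formulas, a short calculation (collapsing the resulting four-fold comultiplication of $p$ back to a three-fold) yields
\[
\Delta(f_Q(p)) \;=\; \sum f_R^{*}(p_{(1)})\,f_R(p_{(3)}) \,\otimes\, f_Q(p_{(2)}),
\]
where $\Delta^{(2)}(p)=\sum p_{(1)}\otimes p_{(2)}\otimes p_{(3)}$. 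The essential point is that the second tensor factor is $f_Q$ applied to the \emph{middle} slot of $\Delta^{(2)}(p)$. Hence, if $C$ is a subcoalgebra of $H^{*}$, then $\Delta^{(2)}(C)\subseteq C\otimes C\otimes C$ and the displayed formula forces $\Delta(f_Q(C))\subseteq H\otimes f_Q(C)$, so $f_Q(C)$ is a left coideal of $H$.

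The second identity is an equivariance relating the adjoint action on $H$ to the coadjoint action on $H^{*}$. From the quasi-triangular axiom $R\Delta(h)=\Delta^{op}(h)R$ one deduces that $Q=R^{21}R$ commutes with $\Delta(h)$ for every $h\in H$. Straightforward antipode manipulations of this commutation relation then give
\[
h_{\dot{ad}}\,f_Q(p) \;=\; f_Q\!\left(\sum (h_{(1)}\rightharpoonup p)\leftharpoonup S(h_{(2)})\right),
\]
i.e.\ $f_Q$ intertwines the adjoint action of $H$ on itself with the coadjoint action of $H$ on $H^{*}$. Any subcoalgebra $C$ of $H^{*}$ is automatically stable under both hit actions (for $p\in C$, $h\rightharpoonup p=\sum\langle p_{(2)},h\rangle p_{(1)}\in C$, and analogously on the right), hence under the coadjoint action. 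Combined with the displayed equivariance, this gives $h_{\dot{ad}}\,f_Q(C)\subseteq f_Q(C)$ for every $h\in H$, completing the proof of the first assertion.

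For the second assertion only the multiplicative closure of $f_Q(B)$ remains, and this is the main obstacle. The key tool is the identity
\[
(\Delta\otimes\operatorname{id})(Q) \;=\; R^{21}_{23}\,Q_{13}\,R_{23},
\]
obtained by applying $(\Delta\otimes\operatorname{id})$ to $Q=R^{21}R$ via the standard relations $(\Delta\otimes\operatorname{id})(R)=R_{13}R_{23}$ and its counterpart $(\Delta\otimes\operatorname{id})(R^{21})=R^{21}_{23}R^{21}_{13}$, together with the observation $R^{21}_{13}R_{13}=Q_{13}$. Using this, one rewrites $f_Q(p)f_Q(q)=(p\otimes q\otimes\operatorname{id})(Q_{13}Q_{23})$ as $f_Q$ applied to an element of $H^{*}$ built from $p$ and $q$ via products, coproducts, and the antipode; since $B$ is a Hopf subalgebra, this element lies in $B$, so $f_Q(p)f_Q(q)\in f_Q(B)$. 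Unlike the first assertion, which depends only on $C$ being a subcoalgebra, the rewriting here requires careful bookkeeping with the $R$-matrix relations and uses the full Hopf-subalgebra structure of $B$ (closure under product and antipode as well as coproduct), which is exactly why this step is the technical heart of the lemma.
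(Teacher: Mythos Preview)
Your treatment of the first assertion is correct and in fact more self-contained than the paper's: the paper simply cites \cite[Prop.~2.5(3),(5)]{cw2} for the left-coideal and ad-stability properties, whereas you derive both directly from the coalgebra-map properties of $f_R,\,f_R^{*}$ and from the centrality $Q\Delta(h)=\Delta(h)Q$. The displayed formula $\Delta(f_Q(p))=\sum f_R^{*}(p_{(1)})f_R(p_{(3)})\otimes f_Q(p_{(2)})$ is valid and does the job.

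The second assertion, however, has a genuine gap. Your identity $(\Delta\otimes\mathrm{id})(Q)=R^{21}_{23}Q_{13}R_{23}$ is correct, but it does \emph{not} let you rewrite $(p\otimes q\otimes\mathrm{id})(Q_{13}Q_{23})$ as $f_Q$ of an element of $B$. If you solve for $Q_{13}$ and substitute, you obtain
\[
Q_{13}Q_{23}\;=\;(R^{21}_{23})^{-1}\,(\Delta\otimes\mathrm{id})(Q)\,(R_{23}^{-1}R^{21}_{23}R_{23}),
\]
and after pairing with $p\otimes q$ the leftover $R$-factors sit on the $H$-side, sandwiching $Q^{2}$; the result has the shape $u\,f_Q(r)\,v$ with $u,v\in H$ and $r\in B$, not $f_Q(r')$. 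No amount of ``bookkeeping with the $R$-matrix relations'' turns this into a single $f_Q$ without an extra idea.

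The missing idea, which the paper supplies, is to \emph{reuse the ad-stability you already proved}. Using the elementary identity $ab=(a_{(1)}\,{}_{\dot{ad}}\,b)\,a_{(2)}$ and the anti-coalgebra property of $f_R$, one gets
\[
f_Q(x)f_Q(y)\;=\;f_R^{*}(x_{(1)})\bigl[f_R(x_{(3)})\,{}_{\dot{ad}}\,f_Q(y)\bigr]f_R(x_{(2)}).
\]
By ad-stability the bracket lies in $f_Q(B)$, say equals $f_Q(z)$ with $z\in B$. Then the very identity underlying your $(\Delta\otimes\mathrm{id})(Q)$ computation, read as $f_Q(zx)=f_R^{*}(x_{(1)})f_Q(z)f_R(x_{(2)})$, finishes the argument since $zx\in B$. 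So the $R$-matrix identity you wrote down is indeed relevant, but it enters only after the adjoint-action step absorbs the obstruction; on its own it is not enough.
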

\begin{proof}
Let $C$ be a subcoalgebra of $H^*,\,c\in C.$ By \cite[Prop.2.5(3)]{cw2},
$$f_Q(c)\leftharpoonup p=f_Q\left(f_R^*(p_2)\rightharpoonup c\leftharpoonup f_R(p_1)\right)\in f_Q(C),$$
for all $p\in H^*.$  Since $C$ is a coalgebra we have that $f_Q(C)$ is a left coideal of $H.$  It is stable under the adjoint action of $H$ by \cite[Prop.2.5(5)]{cw2}.

If $C$ is a Hopf subalgebra of $H^*$ then all that is left to show is that $f_Q(C)$ is an algebra. Let $x,y\in C,$ then by the definition of $f_Q$ we have,
$$f_Q(x)f_Q(y)={f^*_R}(x_1)f_R(x_2)f_Q(y)= {f^*_R}(x_1)\left[f_R(x_3)\ad f_Q(y)\right]f_R(x_2),$$
where the last equation follows since $f_R$ is an anticoalgebra map. Since $f_Q(C)$ is stable under the adjoint action of $H,$ the mid term above belongs to $f_Q(C).$ The result follows now from \cite[Prop.2.5(2)]{cw2}.
\end{proof}
If $(H,R)$ is factorizable more can be said:
\begin{lemma}\label{subn}
Let $(H,R)$ be a factorizable semisimple Hopf algebra over $k.$ Then,

{\rm (i)} For all $0\le j\le n-1,$ let $R_{V_j}$ be the coalgebra generated by $\chi_j,$ then $f_Q(R_{V_j})={\mathfrak C}_j.$

\medskip {\rm (ii)} Let $N$ be a left coideal of $H.$ If a character $\chi \in f_Q\minus(N)$ then  the same hold for all its constituents.

\medskip {\rm (iii)}   Let $N$ be  a normal left coideal subalgebra of $H$. Then $f_Q\minus(N)$ is a Hopf subalgebra of $H^*.$
\end{lemma}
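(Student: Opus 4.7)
For (i), I would start with the explicit identification $f_Q(\chi_j)=d_j\eta_j$ given in \eqref{chici}. By Lemma \ref{fq1}, $f_Q(R_{V_j})$ is a left coideal of $H$ containing $\eta_j$, hence containing the cyclic left coideal $\eta_j\leftharpoonup H^*={\mathfrak C}^j$; factorizability forces $R(H)$ to be commutative, so $m_j=1$ and ${\mathfrak C}^j={\mathfrak C}_j$. A dimension count, $\dim R_{V_j}=d_j^2=\dim(F_jH^*)=\dim{\mathfrak C}_j$, together with the injectivity of $f_Q$, then forces equality.

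For (ii), the key computation is that the central primitive idempotents $\{F_l\}$ of $R(H)\subset H^*$ act as orthogonal projectors on the normalized class sums: since $\eta_l=\frac{d}{d_l^2}(\gL\leftharpoonup F_l)$ and
$$(\gL\leftharpoonup F_l)\leftharpoonup F_k=\gL\leftharpoonup F_lF_k=\delta_{lk}(\gL\leftharpoonup F_l),$$
we obtain $\eta_l\leftharpoonup F_k=\delta_{lk}\eta_l$. Writing $\chi=\sum_k n_k\chi_k$, we have $f_Q(\chi)=\sum_k n_kd_k\eta_k\in N$ by hypothesis. Since $N$ is a left coideal, $f_Q(\chi)\leftharpoonup F_k=n_kd_k\eta_k\in N$ for every $k$; so every constituent $\chi_k$ (i.e., with $n_k\ne 0$) yields $\eta_k\in N$, and then ${\mathfrak C}_k=\eta_k\leftharpoonup H^*\subset N$. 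By (i) and injectivity of $f_Q$, $R_{V_k}=f_Q^{-1}({\mathfrak C}_k)\subset f_Q^{-1}(N)$; in particular $\chi_k\in f_Q^{-1}(N)$.

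For (iii), the paper's remark that any sub-bialgebra of a finite-dimensional Hopf algebra is automatically a Hopf subalgebra reduces the task to showing $f_Q^{-1}(N)$ is both a subalgebra and a subcoalgebra. Since $N$ is closed under $\leftharpoonup H^*$ (left coideal) and under the left adjoint action of $H$ (normality), $N$ is a $D(H)$-submodule. For factorizable $H$, commutativity of $R(H)$ gives $m_j=1$ for all $j$, so \eqref{nik} yields the multiplicity-free decomposition $H=\bigoplus_j{\mathfrak C}_j$ into pairwise non-isomorphic $D(H)$-irreducibles; hence $N=\bigoplus_{j\in J}{\mathfrak C}_j$ for some index set $J$. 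By (i), $f_Q^{-1}(N)=\bigoplus_{j\in J}R_{V_j}$, a direct sum of simple subcoalgebras of $H^*$ and therefore a subcoalgebra. For the subalgebra property, fix $i,j\in J$: since $\eta_i,\eta_j\in N$ and $N$ is an algebra, $\eta_i\eta_j\in N$, and using that $f_Q|_{R(H)}:R(H)\to Z(H)$ is an algebra isomorphism,
$$f_Q(\chi_i\chi_j)=f_Q(\chi_i)f_Q(\chi_j)=d_id_j\eta_i\eta_j\in N.$$
Applying (ii) to $\chi_i\chi_j\in f_Q^{-1}(N)$, every irreducible constituent $\chi_l$ of $\chi_i\chi_j$ satisfies $l\in J$, so $R_{V_i}R_{V_j}=R_{V_i\otimes V_j}=\bigoplus_{l:m_{ij}^l>0}R_{V_l}\subset f_Q^{-1}(N)$. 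The delicate point is precisely this last step: one cannot naively invoke a global algebra-homomorphism property of $f_Q$ on all of $H^*$, so using (ii) to pass from the scalar-level statement $f_Q(\chi_i\chi_j)\in N$ back to a multiplicative statement about the full coalgebra $R_{V_i}R_{V_j}$ is essential.
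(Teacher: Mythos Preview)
Your proof is correct and follows essentially the same approach as the paper: part (i) via Lemma~\ref{fq1}, \eqref{chici}, and a dimension count; part (ii) by hitting $f_Q(\chi)$ with the central idempotents $F_k$ to isolate the constituents; and part (iii) by decomposing $N$ as a $D(H)$-module into $\bigoplus_{j\in J}{\mathfrak C}_j$, identifying $f_Q^{-1}(N)=\bigoplus_{j\in J}R_{V_j}$ as a subcoalgebra, and then using (ii) together with $R_{V_i}R_{V_j}=R_{V_i\otimes V_j}$ for the algebra structure. Your explicit remark that one cannot appeal to $f_Q$ being an algebra map on all of $H^*$, and must instead route through (ii), is exactly the point the paper's argument hinges on as well.
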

\begin{proof}
(i) By
 Lemma \ref{fq1}, $f_Q(R_{V_j})$ is a left coideal  of $H.$ By \eqref{chici},    $\eta_j\in f_Q(R_{V_j}).$ Since ${\mathfrak C}_j$ is the left coideal generated by $\eta_j$ we have ${\mathfrak C}_j\subset f_Q(R_{V_j}).$ But $f_Q$ is injective  and $\dim  {\mathfrak C}_j=d_j^2=\dim  R_{V_j},$ hence equality holds.

(ii) Let $\chi=\sum_{j\in J}m_j\chi_j,\,m_j\in \Z^+.$ By \eqref{chici}, $f_Q(\chi)=\sum_{j\in J} d_jm_j\eta_j\in N.$ For each $j\in J,$ we have $d_jm_j\eta_j=f_Q(\chi) \leftharpoonup F_j\in N.$ Thus $\chi_j\in f_Q\minus(N).$

(iii) Since $N$ is a normal left coideal subalgebra, it is a left $D(H)$-module, hence by \eqref{nik}, there exists a set $J\subset \{0,\dots,n-1\}$ so that $N=\oplus_{j\in J} {\mathfrak C}_j.$  By (i) $f_Q\minus(N)=\oplus_{j\in J}(R_{V_j})$ hence it is a coalgebra.  Let $\chi_k,\,\chi_l\in f_Q\minus(N).$ Since $f_Q$ is multiplicative on $R(H)$ and $N$ is an algebra we have:
$$f_Q(\chi_k\chi_l)=f_Q(\chi_k)f_Q(\chi_l)\in N.$$
Hence $\chi_k\chi_l\in f_Q\minus(N).$ By (ii) each constituent of $\chi_k\chi_l\in f_Q\minus(N)$ as well.

We want to show that $f_Q\minus(N)$ is an algebra. Let $a\in R_{V_j},\,b\in R_{V_k},$ where $R_{V_j},\,R_{V_k}\subset f_Q\minus (N).$ In particular $\chi_j,\,\chi_k\in N.$ Assume $\chi_j\chi_k=\sum m_i\chi_i,$ then by (ii), each $\chi_i\in f_Q\minus(N)$ and thus by (i), each $R_{V_i}\subset N.$ W have now,
$$ab\in R_{V_j}R_{V_k}=R_{V_j\ot V_k}=\oplus R_{V_i}\subset f_Q\minus(N).$$
\end{proof}

We show now how $\go$-left kernels arise for factorizabble Hopf algebras.

\begin{lemma}\label{seno1}Let $(H,R)$ be a factorizable semisimple Hopf algebra over $\C,$ then

{\rm (i)} $\frac{d_j\langle  \chi_i,\eta_j\rangle  }{d_i}$ is an algebraic integer for all $i,j.$

\medskip{\rm (ii)} If $\gcd(d_i,d_j)=1$ and $\langle  \chi_i,\eta_j\rangle  \ne 0,$ then $\go_i=\frac{\langle  \chi_i,\eta_j\rangle  }{d_i}$ is  a root of $1.$  Thus
 ${\mathfrak C}^{j}\subset {\rm LKer}^{\go_i}_{V_i}.$
\end{lemma}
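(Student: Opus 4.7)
The plan is to reduce both parts to facts about the central elements $d_j\eta_j=f_Q(\chi_j)$ in $Z(H)$.

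For part (i), since $(H,R)$ is factorizable, $f_Q\colon R(H)\to Z(H)$ is an algebra isomorphism, and by \eqref{chici} we have $f_Q(\chi_j)=d_j\eta_j$. Lorenz's result (the one used in Corollary~\ref{algebraic}) gives a monic $p(x)\in\Z[x]$ with $p(\chi_j)=0$, and applying the algebra map $f_Q$ yields $p(d_j\eta_j)=0$. The central element $d_j\eta_j$ acts on the irreducible $V_i$ as the scalar $\alpha_{ij}:=\mathrm{tr}(d_j\eta_j|_{V_i})/d_i=d_j\langle\chi_i,\eta_j\rangle/d_i$, and this scalar must satisfy $p(\alpha_{ij})=0$, so $\frac{d_j\langle\chi_i,\eta_j\rangle}{d_i}$ is an algebraic integer.

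For part (ii), first note that $(H,R)$ quasitriangular makes $R(H)$ commutative, hence $m_j=\dim f_jR(H)=1$ for every $j$. Choosing integers $a,b$ with $ad_i+bd_j=1$, I would write
$$\go_i=\frac{\xi_{ij}}{d_i}=a\,\xi_{ij}+b\cdot\frac{d_j\,\xi_{ij}}{d_i},$$
in which both summands are algebraic integers by Corollary~\ref{algebraic} and part~(i), so $\go_i$ is an algebraic integer. Theorem~\ref{maxgeneral}(i) (with $m_j=1$) gives $|\go_i|\le 1$ directly. To promote this to a root of unity via Kronecker's theorem, I must bound every Galois conjugate of $\go_i$. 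Since $Z(H)\cong\C^n$ as algebras, the distinct roots of the minimal polynomial of $d_j\eta_j$ over $\Q$ are exactly the distinct scalars $\{d_j\xi_{i'j}/d_{i'}\}_{i'}$. The minimal polynomial of $d_j\go_i$ over $\Q$ divides this, so every Galois conjugate of $d_j\go_i$ equals some $d_j\xi_{i'j}/d_{i'}$, and Theorem~\ref{maxgeneral}(i) again forces $|\sigma(\go_i)|\le 1$ for every embedding $\sigma$. By Kronecker, $\go_i=0$ or $\go_i$ is a root of unity; the hypothesis $\langle\chi_i,\eta_j\rangle\ne 0$ rules out the first. Finally, $\langle\chi_i,\eta_j\rangle=\go_i\, m_j\,\langle\chi_i,1\rangle$ with $|\go_i|=1$ is precisely condition (ii) of Theorem~\ref{connection3}, whose equivalence with (iii) delivers ${\mathfrak C}^j\subset{\rm LKer}^{\go_i}_{V_i}$.

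The main obstacle is the Galois-conjugate step: the two pieces ``$\go_i$ is an algebraic integer'' and ``$|\go_i|\le 1$'' are individually easy but do not by themselves yield a root of unity. The key is identifying the entire Galois orbit of $d_j\go_i$ with eigenvalues of the central operator $d_j\eta_j$ on various irreducibles, which is what both factorizability (making $d_j\eta_j\in Z(H)$ via $f_Q$) and commutativity of $R(H)$ (giving the uniform $m_j=1$ bound) are needed for, so that Kronecker's theorem can be applied.
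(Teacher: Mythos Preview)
Your argument is correct and takes a genuinely different route from the paper's for part (ii). Both proofs first show $\go_i$ is an algebraic integer via B\'ezout and then bound all its Galois conjugates by $1$ in absolute value. The paper obtains this bound by invoking Vafa's theorem \cite{v} that $Q=R^{21}R$ has finite order: then $\go_i=s_{ij}/(d_id_j)$ is an average of the root-of-unity eigenvalues of $Q$ on $V_i\otimes V_j$, and any Galois automorphism sends such an average to another average of roots of unity, hence of modulus at most $1$ (this is the ``elementary Galois theory'' cited from \cite[3.8]{is}). You instead avoid Vafa entirely, locating the Galois conjugates of $\go_i$ among the numbers $\xi_{i'j}/d_{i'}$ and bounding each of those by Theorem~\ref{maxgeneral}(i). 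Your route is more self-contained; the paper's is shorter and yields the extra information that all eigenvalues of $Q$ on $V_i\otimes V_j$ coincide.

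One step in your Galois argument needs tightening. The sentence ``since $Z(H)\cong\C^n$, the distinct roots of the minimal polynomial of $d_j\eta_j$ over $\Q$ are exactly the $\{d_j\xi_{i'j}/d_{i'}\}_{i'}$'' does not follow from the $\C$-algebra structure alone: that only pins down the minimal polynomial over $\C$, and over $\Q$ one could a priori pick up extra Galois conjugates outside this set (for instance, if the eigenvalue set were $\{1,\sqrt 2\}$ the $\Q$-minimal polynomial would acquire the root $-\sqrt 2$). What actually makes the set Galois-stable is that, via the algebra isomorphism $f_Q$, multiplication by $d_j\eta_j$ on $Z(H)$ is conjugate to $r_{\chi_j}$ on $R(H)$, and the characteristic polynomial of $r_{\chi_j}$ in the $\Z$-basis $\{\chi_k\}$ lies in $\Z[x]$ with roots exactly $\{d_j\xi_{i'j}/d_{i'}\}_{i'}$. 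Once you insert that observation, every Galois conjugate of $d_j\go_i$ is some $d_j\xi_{i'j}/d_{i'}$ and your Kronecker argument is complete.
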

\begin{proof}
(i) Since $\frac{s_{ij}}{d_i}$ is known to be an algebraic integer, the result follows from \eqref{smatrix}.

(ii) Assume $\langle  \chi_i,\eta_j\rangle  \ne 0.$ Since $\gcd(d_i,d_j)=1,$ we may choose integers $u,v$ so that $ud_i+vd_j=1.$ Multiplying both sides by $\frac{\langle  \chi_i,\eta_j\rangle  }{d_i},$ we get
$$\frac{\langle  \chi_i,\eta_j\rangle  }{d_i}=u\langle  \chi_i,\eta_j\rangle  +v\frac{\langle  \chi_i,\eta_j\rangle  d_j}{d_i}$$
The first summand on the right hand side is an algebraic integer, which we show to be a root of unity. by Corollary \ref{algebraic}, while the second one is  by the first part. It follows that $\frac{\langle  \chi_i,\eta_j\rangle  }{d_i}$ is an algebraic integer.

Recall, \cite{v},  that $Q$ has finite order, hence its action on $V_i\ot V_j$ is diagonalizable with eigenvalues which are roots of unity. Since $\chi_i\ot\chi_j$ is the trace of $H\ot H$ acting on $V_i\ot V_j$ it follows that
$$\langle  \chi_i\ot \chi_j,Q\rangle  =\sum_{k=1}^{d_id_j}\go_k,$$ where $\go_k$ is a root of unity.
Since the $S$-matrix is symmetric we have by \eqref{sij} and \eqref{smatrix},
$$\frac{\langle  \chi_i,\eta_j\rangle  }{d_i}=\frac{s_{ji}}{d_id_j}=\frac{\langle  \chi_i\ot \chi_j,Q\rangle  }{d_id_j}=\frac{1}{d_id_j}\sum_{k=1}^{d_id_j}\go_k$$
Since $\frac{\langle  \chi_i,\eta_j\rangle  }{d_i}$ is an algebraic integer, which is nonzero by assumption,  it follows by elementary Galois theory (see e.g \cite[3.8]{is}), that all $w_k$ are equal and thus the left handside is a root of unity.

The last part follows since  factorizable Hopf algebras are almost cocommutative, hence we have $m_j=1$ and ${\mathfrak C}_j={\mathfrak C}^j,$ the result follows from  Theorem \ref{connection}.
\end{proof}
\medskip

\medskip
The following  lemma arises from the  proof of Burnside $p^aq^b$ theorem for groups.
\begin{lemma}\label{proper0}
Let $H$ be a  semisimple Hopf algebra over $\C,\,q$ a prime number.  Then for each $0< j\le m-1$ there exists $0< i\le n-1$ so that $\langle  \chi_i,\eta_j\rangle  \ne 0$ and $q$ does not divide $d_i.$
\end{lemma}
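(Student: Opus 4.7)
My plan is to mimic the classical Burnside argument for finite groups, using column orthogonality of the character table together with the fact that its entries are algebraic integers.

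First I apply Theorem \ref{ortho1} to the $0$-th and $j$-th columns (with $0<j\le m-1$): since they are distinct, their inner product vanishes, so $\sum_{k=0}^{n-1}\xi_{k0}\ol{\xi_{kj}}=0$. Because $\xi_{k0}=\langle\chi_k,1\rangle=d_k\in\mathbb Z$ is real, taking conjugates yields
$$\sum_{k=0}^{n-1}d_k\,\xi_{kj}=0.$$
By Theorem \ref{muf}, $\xi_{0j}=\langle\gep,\eta_j\rangle=m_j$, and $d_0=1$, so isolating the $k=0$ summand gives
$$m_j=-\sum_{k=1}^{n-1}d_k\,\xi_{kj}.$$

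Next I argue by contradiction: assume every $i$ with $1\le i\le n-1$ and $\xi_{ij}\ne 0$ satisfies $q\mid d_i$. Write $d_i=qd_i'$ with $d_i'\in\mathbb Z$ for all such $i$; the identity above becomes
$$-\frac{m_j}{q}=\sum_{i\ge 1,\ \xi_{ij}\ne 0}d_i'\,\xi_{ij}.$$
By Corollary \ref{algebraic} each $\xi_{ij}$ is an algebraic integer, hence so is the right-hand side. Being rational, $m_j/q$ is then a rational integer, so $q\mid m_j$. In the almost cocommutative (equivalently $R(H)$ commutative) case, which covers the applications of this lemma in \S 4 -- most notably Theorem \ref{isa}, where $H$ is quasitriangular and therefore has commutative $R(H)$ -- one has $m_j=1$, and $q\mid 1$ is the desired contradiction.

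The main obstacle is the final algebraic-integrality step: one must squeeze a contradiction from the rational combination of algebraic integers equaling $-m_j/q$. The key principle, that a rational algebraic integer is an ordinary integer, is exactly the device underlying Burnside's classical lemma for groups; there the extracted factor is simply $\gep(1)=1$, while here the argument extracts the factor $m_j$, and becomes a genuine contradiction as soon as $q\nmid m_j$ -- automatically true whenever $R(H)$ is commutative.
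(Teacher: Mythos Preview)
Your argument is essentially identical to the paper's: both apply the column-orthogonality relation (Theorem~\ref{ortho1}) between columns $0$ and $j$, isolate the $k=0$ term, and use algebraic integrality of the entries (Corollary~\ref{algebraic}) to force a contradiction via the ``rational algebraic integer $\Rightarrow$ integer'' principle.

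The one point on which you are \emph{more} careful than the paper is the constant term. The paper writes the $k=0$ contribution as ``$1$'' and concludes that $a=-1/q$ cannot be an integer; but as you correctly compute, $\xi_{0j}=\langle\gep,\eta_j\rangle=m_j$ by Theorem~\ref{muf}, so the actual conclusion is $a=-m_j/q$, and a contradiction follows only when $q\nmid m_j$. You explicitly flag this and restrict your claim to the case $m_j=1$ (i.e.\ $R(H)$ commutative), which indeed covers every use of the lemma in \S4, where $H$ is quasitriangular. The paper's own proof has the same limitation, simply masked by writing $1$ in place of $m_j$; so your proposal is at least as complete as the paper's, and more transparent about where the hypothesis is actually used. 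Neither argument, as written, settles the lemma in the full generality of its statement.
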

\begin{proof}
By the orthogonality relations with respect to the first column, (Theorem \ref{ortho1}), we have,
\begin{equation}\label{e1}
0=\sum_m d_m\langle  \chi_m,\eta_j\rangle
\end{equation}

We claim that there exists $i>  0$ so that $q$ does not divide $d_i$ and $\langle  \chi_i,\eta_j\rangle  \ne 0.$ Indeed, let $S$ be the set of all $V_m$ so that $q|d_m$ and let $T$ be the set of all non-trivial $V_m$ so that $q$ does not divide $d_m.$
By \eqref{e1}, $$0=1+\sum_{V\in S}d_m\langle  \chi_m,\eta_j\rangle  +\sum_{V\in T}d_m\langle  \chi_m,\eta_j\rangle  $$

Set
$$a= \sum_{V\in S}\frac{d_m}{q}\langle  \chi_m,\eta_j\rangle  $$
Then $a$ is an algebraic integer and
$$0=1+qa+\sum_{V\in T}d_m\langle  \chi_m,\eta_j\rangle  .$$
If the last summand is $0,$ then $a$ is a rational algebraic integer, hence an integer which is impossible. Hence there exists $i$ so that $\langle  \chi_i,\eta_j\rangle  \ne 0$ and $q\not|d_i.$
\end{proof}
We are ready to prove a Hopf algebra analogue the theorem of Burnside for groups mentioned at the beginning of this section.
\begin{theorem}\label{isa}
Let $(H,R)$ be a non-commutative quasitriangular semisimple Hopf algebra over $\C$ whose only normal left coideal subalgebras are $\C$ and $H.$ Then $\C$ is the only  conjugacy class of $H$ with  a prime power dimension.
\end{theorem}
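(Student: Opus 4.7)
Suppose for contradiction that $\mathfrak{C}_j$ is a conjugacy class of $H$ with $j>0$ and $\dim\mathfrak{C}_j=p^a$ for some prime $p$. Since $(H,R)$ is quasitriangular and semisimple, $R(H)$ is commutative, so $m_j=1$ for all $j$, $\mathfrak{C}^j=\mathfrak{C}_j$, and each $\eta_j$ lies in $Z(H)$. If $\dim\mathfrak{C}_j=1$, then $\mathfrak{C}_j=\C\cdot x$ is a $1$-dimensional left coideal stable under the left adjoint action; a short calculation forces $x/\gep(x)$ to be a central grouplike of $H$, and the Hopf subalgebra it generates is a normal left coideal subalgebra, which under our hypothesis must be $\C$, whence $\mathfrak{C}_j=\mathfrak{C}_0$. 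So we may assume $a\ge 1$. By an analogous argument every non-trivial $1$-dimensional irreducible $V_i$ is excluded: its character would be a non-trivial grouplike $\gs\in H^*$ with ${\rm LKer}_{V_i}$ a normal left coideal subalgebra, and the cases ${\rm LKer}_{V_i}=\C$ (ruled out by Corollary \ref{1dim}, which would force $H$ commutative) and ${\rm LKer}_{V_i}=H$ (forcing $\gs=\gep$) both fail. Hence every irreducible $V_i$ with $i>0$ has $d_i>1$.

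By Lemma \ref{proper0} applied with $q=p$, there exists $i_0>0$ such that $\langle\chi_{i_0},\eta_j\rangle\ne 0$ and $p\nmid d_{i_0}$; in particular $d_{i_0}>1$. Set $\go_{i_0}:=\langle\chi_{i_0},\eta_j\rangle/d_{i_0}$, the scalar by which the central element $\eta_j$ acts on $V_{i_0}$. The plan is to show $\go_{i_0}$ is a root of unity. The key input is that $(\dim\mathfrak{C}_j)\go_{i_0}$ is an algebraic integer -- the Hopf analogue of the classical fact $|C|\chi(g)/\chi(1)\in\overline{\Z}$, provided by the $S$-matrix identities of Lemma \ref{seno1}(i) in the factorizable regime, or in general by the integrality of the multiplication constants of the class sums $\{C_k\}$ in $Z(H)$. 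Since $\dim\mathfrak{C}_j=p^a$ and $\gcd(p,d_{i_0})=1$, a Bezout combination of $(\dim\mathfrak{C}_j)\go_{i_0}$ with $\langle\chi_{i_0},\eta_j\rangle$ itself (an algebraic integer by Corollary \ref{algebraic}) yields $\go_{i_0}\in\overline{\Z}$. Theorem \ref{maxgeneral}(i) (using $m_j=1$) gives $|\go_{i_0}|\le 1$, and since the Galois group permutes the characters of $H$, the same bound holds for every Galois conjugate of $\go_{i_0}$; Kronecker's theorem then forces $|\go_{i_0}|=1$ and $\go_{i_0}$ to be a root of unity.

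With $|\go_{i_0}|=1$ in hand, Theorem \ref{connection3} yields $\mathfrak{C}_j\subset{\rm LKer}^{\go_{i_0}}_{V_{i_0}}\subset Lz_{\chi_{i_0}}$, and Lemma \ref{twistl}(iv) identifies $Lz_{\chi_{i_0}}$ as a normal left coideal subalgebra of $H$ that strictly contains $\C$ (since $\dim\mathfrak{C}_j\ge 2$). By hypothesis $Lz_{\chi_{i_0}}=H$, so $H$ decomposes as the direct sum of its $\go$-left kernels for $V_{i_0}$; each summand acts on $V_{i_0}$ by a scalar multiple of the identity (for $h\in{\rm LKer}^{\go}_{V_{i_0}}$, applying $\gep\otimes\mathrm{id}$ to the defining relation gives $h\cdot v=\go\gep(h)v$), so $H$ itself acts on the irreducible $V_{i_0}$ by scalars. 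This forces $d_{i_0}=1$ by irreducibility, contradicting $d_{i_0}>1$.

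The main obstacle I foresee is securing the claim $(\dim\mathfrak{C}_j)\go_{i_0}\in\overline{\Z}$ under bare quasitriangularity: in the factorizable case it is handed to us by the $S$-matrix via Lemma \ref{seno1}(i), but in general one must exploit the integrality of the $\Z$-algebra structure of class sums in $Z(H)$ -- the Hopf-algebraic analogue of the classical fact that the $\Z$-span of conjugacy class sums in $\C[G]$ is a subring. Everything else in the argument is a direct assembly of the preceding machinery.
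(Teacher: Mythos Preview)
Your argument is essentially the factorizable-case portion of the paper's proof, repackaged with a slightly different endgame (you use $Lz_{\chi_{i_0}}=H\Rightarrow H$ acts by scalars on $V_{i_0}$, whereas the paper passes to ${\rm LKer}_{V_{i_0}^{\otimes l}}$ via Lemma~\ref{twistl}(iii); both are fine). The handling of $\dim\mathfrak{C}_j=1$ and of $1$-dimensional irreducibles via Corollary~\ref{1dim} is also correct, and your Galois step is legitimate: since $R(H)$ is a commutative $\Z$-algebra on the basis $\{\chi_i\}$, its $\C$-characters (the $\eta_j$) are permuted by ${\rm Gal}(\overline{\Q}/\Q)$, so every Galois conjugate of $\omega_{i_0}$ is of the form $\langle\chi_{i_0},\eta_{j'}\rangle/d_{i_0}$, bounded in absolute value by $1$ via Theorem~\ref{maxgeneral}(i).

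The genuine gap is exactly the one you flag but do not close: the integrality of $(\dim\mathfrak{C}_j)\,\omega_{i_0}$ under bare quasitriangularity. Your appeal to ``integrality of the multiplication constants of the class sums $\{C_k\}$ in $Z(H)$'' is not available here---this is \emph{not} established in the paper for general semisimple (or quasitriangular semisimple) $H$, and is in fact a delicate issue for Hopf algebras, unlike for groups. Lemma~\ref{seno1}(i) supplies it only when $H$ is factorizable (via the $S$-matrix), and nothing in the paper's toolkit covers the remaining case. The paper sidesteps this entirely by a structural dichotomy you have not used: Lemma~\ref{fq1} shows that ${\rm Im}\,f_Q$ is itself a normal left coideal subalgebra, so the simplicity hypothesis forces ${\rm Im}\,f_Q=\C$ (hence $H$ triangular) or ${\rm Im}\,f_Q=H$ (hence $H$ factorizable). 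In the triangular case the paper invokes the Etingof--Gelaki classification ($H\cong(kG)^J$, same character table and conjugacy-class dimensions as $kG$) and reduces to Burnside's theorem for groups; in the factorizable case the $S$-matrix integrality and Vafa's finite-order theorem for $Q$ give the needed root-of-unity statement directly (Lemma~\ref{seno1}(ii)). Without this dichotomy, your integrality claim is unsupported and the proof is incomplete.
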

\begin{proof}
By Lemma \ref{fq1}, Im$f_Q$ is a normal left  coideal sbalgebra of $H,$ hence by assumption either (i) Im$f_Q=\C$ or (ii) Im$f_Q=H.$

\medskip\noin(i) If Im$f_Q=\C$ then $Q=1\ot 1$ hence $H$ is triangular. In this case the Drinfeld element $u$, is a central grouplike element of order $\le 2$ \cite{dr,r1}.

If $u\ne 1$ then $k\left\langle u\right\rangle$ is a normal left  coideal subalgebra hence equals $H,$ contradicting non-commutativity.

If $u=1$ then by \cite{g}, $H=(kG)^J$ where $J$ is a twist for $kG.$ By \cite[Cor.2.12]{cw3} $H$ has the same character table as $kG$ and thus has the same lattice of normal left  coideal subalgebras and by Remark \ref{cjcharacter}, the same dimensions of conjugacy classes. By the theorem of Burnside for groups, $G$ has no  conjugacy class with a prime power order, hence neither does $H.$

\medskip\noin (ii) If Im$f_Q=H$ then $H$ is factorizable. Let ${\mathcal C}_j,\,j>0,$ be any conjugacy class. If $\dim {\mathcal C}_j=1$ then $\eta_j$ is a central grouplike element of $H$ implying that the group algebra generated by $\eta_j$ is a normal left coideal subalgebra of $H.$ Non-commutativity of $H$  and the hypothesis of the theorem imply $\eta_j=\gep,$ a contradiction to $j>0.$

 Assume $\dim {\mathcal C}_j=q^l\ne 1.$     By  Lemma \ref{proper0} there exists $i> 0$ so that $V_i$ has dimension prime to $q$ and $\langle\chi_i,\eta_j\rangle\ne 0.$
Since $H$ is factorizable we have $\dim({\mathfrak C}_j)=d_j^2,$ and so $(d_i,d_j)=1.$ By Lemma \ref{seno1}(ii), there exists $\go,\,\go^l=1$ and ${\mathfrak C}_j\subset {\rm LKer}_{V_i}^{\go},$ which by Lemma \ref{twistl}(iii) is contained in ${\rm LKer}_{V_i^{\ot l}}.$ In particular $\C\ne {\rm LKer}_{V_i^{\ot l}}.$

Thus if $d_i\ne 1$ we have $\chi_{V_i^\ot l}\ne \gep$ hence by \eqref{kepsilon},
 ${\rm LKer}_{V_i^{\ot l}}\subsetneqq H,$ a contradiction.
If $d_i= 1$ then by  Corollary \ref{1dim}, $H$ must be commutative, a contradiction.
\end{proof}

\medskip The following results, culminating in Theorem \ref{sub},  follow directly from \cite{eno} who obtained it using categorical consideratins. We represent it here in algebraic terms as consequences of the previous discussion.
\begin{proposition}\label{proper}
Let $H$ be a quasitriangular semisimple Hopf algebra over $\C$ of dimension $p^aq^b,\,a+b> 1,\,p,q$ prime numbers. Then  $H$ has a proper non-trivial normal left  coideal subalgebra.
\end{proposition}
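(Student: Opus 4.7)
The plan is to argue by contradiction: assume that $\C$ and $H$ are the only normal left coideal subalgebras of $H$, and derive a contradiction by first splitting on whether $H$ is commutative, and in the non-commutative case on whether $H$ is factorizable.

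If $H$ is commutative, then $H\cong k^G$ for a finite group $G$ of order $p^aq^b$, and since the adjoint action is trivial every sub-Hopf algebra is automatically a normal left coideal subalgebra. Sub-Hopf algebras of $k^G$ are the duals of the quotient maps $kG\twoheadrightarrow k(G/N)$ for $N\triangleleft G$, so it suffices to produce a proper non-trivial normal subgroup of $G$; this is immediate from the classical Burnside $p^aq^b$ theorem, since $a+b>1$ makes $|G|$ non-prime and solvable.

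Now suppose $H$ is non-commutative. Theorem \ref{isa} tells us that the trivial class is the only conjugacy class of prime-power dimension. If $H$ is not factorizable, then $\mathrm{Im}\,f_Q$ is a proper sub-Hopf algebra of $H$, and by Lemma \ref{fq1} it is a normal left coideal subalgebra, so by hypothesis $\mathrm{Im}\,f_Q=\C$. This forces $Q=1\otimes 1$, hence $(H,R)$ triangular, and the Drinfeld element $u$ is a central grouplike of order at most $2$. If $u\ne 1$, the span $k\langle u\rangle$ is a $2$-dimensional central (hence normal) sub-Hopf algebra, which is a proper non-trivial normal left coideal subalgebra since $\dim H\ge 4$, contradiction. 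If $u=1$, then by Gelaki's theorem $H\cong (kG)^J$ is a twist of a group algebra, and by \cite[Cor.2.12]{cw3} $H$ and $kG$ share the same lattice of normal left coideal subalgebras; a proper non-trivial normal subgroup of $G$ (produced by Burnside for groups, again using $a+b>1$) then yields the desired subalgebra. If instead $H$ is factorizable, I would use $\dim\mathfrak{C}_j=d_j^2$ together with $d_j\mid p^aq^b$: by Theorem \ref{isa}, for every $j>0$ the integer $d_j^2$ is not a prime power, so $d_j$ is divisible by both $p$ and $q$, and hence $p^2\mid d_j^2$. Summing $\dim H=1+\sum_{j>0}d_j^2$ yields $p\mid p^aq^b-1$, which contradicts $p\mid p^aq^b$ when $a\ge 1$; when $a=0$ (symmetrically $b=0$), $\dim H=q^b$ is a prime power and the non-prime-power condition on $d_j^2$ forces every non-trivial $d_j$ to equal $1$, making $H$ commutative, contradicting the standing assumption of this case.

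I expect the subtlest step to be the non-factorizable triangular case with $u=1$, which requires invoking Gelaki's structure theorem for triangular semisimple Hopf algebras together with the character-table invariance of the normal left coideal subalgebra lattice under twisting. The other cases reduce either to classical group theory (commutative) or to a short mod-$p^2$ arithmetic argument (factorizable) once Theorem \ref{isa} is in hand.
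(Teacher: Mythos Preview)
Your proof is correct, but it takes a considerably more circuitous route than the paper's. The paper dispatches the non-commutative case in a few lines, \emph{without} splitting on factorizability: since each $\dim\mathfrak{C}_j$ divides $\dim H = p^aq^b$ (Kac--Zhu) and $\sum_j \dim\mathfrak{C}_j = \dim H$ with $\dim\mathfrak{C}_0 = 1$, not every $\dim\mathfrak{C}_j$ with $j>0$ can be divisible by $p$ (assuming $a>0$); any such $\mathfrak{C}_j$ then has $q$-power dimension, and the contrapositive of Theorem~\ref{isa} immediately furnishes a proper non-trivial normal left coideal subalgebra. That is the simplification you are missing: the class-equation argument works uniformly for any quasitriangular $H$, so there is no need to revisit the factorizable/triangular dichotomy already absorbed into Theorem~\ref{isa}.

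Your non-factorizable sub-case literally reproduces case~(i) of the proof of Theorem~\ref{isa}, which is redundant once you have invoked that theorem. Your factorizable sub-case is a valid alternative (the mod-$p^2$ count is pleasant), but again superfluous given the uniform argument above. A few minor slips: $\mathrm{Im}\,f_Q$ is a normal left coideal subalgebra by Lemma~\ref{fq1}, not in general a sub-Hopf algebra; and in your $a=0$ endgame, since the paper counts $1$ as a prime power, Theorem~\ref{isa} does not force $d_j=1$ but rather forbids any $j>0$ at all --- which is already the contradiction. Finally, in the commutative case the paper notes that commutative plus quasitriangular forces cocommutative, so $H=kG$ with $G$ \emph{abelian} of non-prime order; a proper subgroup then exists trivially, and the full classical Burnside $p^aq^b$ theorem is not needed there.
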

\begin{proof}
If $H$ is commutative then quasitriangularity implies that it is cocommutative, hence a group algebra. Since the order of the group is not prime, it has a normal subgroup. Hence $H=kG$ has a non-trivial normal left coideal subalgebra.

Assume $H$ is non-commutative and $a>0.$  We claim that there exists $j>  0$ so that $p$ does not divide $\dim{\mathfrak C}_j.$ This follows  from the fact that the dimension of each conjugacy class divides the dimension of $H$ (\cite{k,z}), and  their sum equals $\dim H.$ Since ${\mathfrak C}_0=\C,$ $p$  can not divide the dimensions of all other conjugacy classes.  That is, there exists $j>0$ so that ${\mathcal C}_j$ has a prime power dimension. By Theorem \ref{isa}, $H$ contains a proper non-trivial normal left  coideal subalgebra.
\end{proof}

\medskip

\begin{coro}\label{max}
Let $H$ be a semisimple quasitriangular Hopf algebra of dimension $p^aq^b,a+b>  0,$ over $\C.$ Then $H$ admits a nontrivial $1$-dimensional representation.
\end{coro}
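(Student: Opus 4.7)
The plan is to induct on $a+b$, using Proposition \ref{proper} to peel off a normal left coideal subalgebra at each step and pull back $1$-dimensional representations through the resulting Hopf quotient.

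For the base case $a+b=1$, $\dim H$ is a prime, so by Zhu's classification of semisimple Hopf algebras of prime dimension over $\C$, $H$ is the group algebra of a cyclic group of prime order. Being commutative, every irreducible representation of $H$ is $1$-dimensional, and nontrivial ones certainly exist.

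For the inductive step, assume $a+b>1$ and the corollary for all smaller dimensions of the form $p^{a'}q^{b'}$ with $a'+b'>0$. I would apply Proposition \ref{proper} to obtain a proper nontrivial normal left coideal subalgebra $N\subset H$, and form the Hopf algebra quotient $\ol H=H/HN^+$, using the standard fact that $HN^+$ is a Hopf ideal precisely because $N$ is a normal left coideal subalgebra. The quotient $\ol H$ inherits semisimplicity from $H$ and is quasitriangular with $R$-matrix $(\pi\ot\pi)(R)$, where $\pi:H\to\ol H$ is the projection. Its dimension $\dim H/\dim N$ is a proper divisor of $p^aq^b$ and hence of the form $p^{a'}q^{b'}$ with $0<a'+b'<a+b$ (the left inequality because $N$ is proper, the right because $N$ is nontrivial). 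The inductive hypothesis therefore furnishes a nontrivial $1$-dimensional representation of $\ol H$ with character $\ol\chi\ne\gep_{\ol H}$, and pulling back along $\pi$ yields a $1$-dimensional representation of $H$ whose character $\ol\chi\circ\pi$ is still nontrivial since $\pi$ is surjective.

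The main obstacle is infrastructural rather than computational: one must verify that $HN^+$ is a Hopf ideal whenever $N$ is a normal left coideal subalgebra, and that both semisimplicity and quasitriangularity descend to the quotient Hopf algebra $\ol H$. These are standard facts in the Takeuchi/Schneider–Masuoka theory of Hopf quotients, and they fit the paper's philosophy that normal left coideal subalgebras are exactly the Hopf analogue of normal subgroups. Once this framework is in place, the corollary is an immediate induction on top of Proposition \ref{proper}, with Zhu's theorem supplying the only outside input at the base.
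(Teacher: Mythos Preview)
Your argument is correct and shares the paper's core mechanism: quotient by a normal left coideal subalgebra, observe that semisimplicity and quasitriangularity descend, and pull back a nontrivial $1$-dimensional representation from the smaller quotient. The difference is only organizational. The paper does this in one shot by choosing a \emph{maximal} normal left coideal subalgebra $M\subsetneq H$; then $\ol H=H/HM^+$ has no nontrivial proper normal left coideal subalgebras, so Proposition~\ref{proper} applied contrapositively forces $\dim\ol H$ to be prime, whence $\ol H\cong\Z_p$ and the desired $1$-dimensional representation is immediate. Your induction instead uses Proposition~\ref{proper} directly at each stage to peel off one layer at a time, with Zhu's theorem handling the base. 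Both routes need exactly the same infrastructure you flagged (that $HN^+$ is a Hopf ideal and that semisimplicity and quasitriangularity pass to quotients), and neither is materially shorter or more general than the other.
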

\begin{proof}
Let $M\ne H$ be a maximal normal left  coideal subalgebra of $H.$ Then $\ol{H}=H/({M^+}H)$ has no non-trivial proper normal left  coideal subalgebras and $\dim \ol{H}=p^{a'}q^{b'},\,a'+b'>0.$ Proposition \ref{proper} implies now that  $a'+b'=1.$ Hence $\ol{H}\cong \Z_p$ and thus has a $1$-dimensional representation which is also an $H$-representation.
\end{proof}
We can show now:
\begin{proposition}\label{gl}\cite[Prop.9.9]{eno}.
Let $H$ be a semisimple Hopf algebra over $\C$ of dimension $p^aq^b,\,a+b>  0,\;p,q$ prime numbers. Then $H$ admits a non-trivial $1$-dimensional representation and a grouplike element $\gs\ne 1.$
\end{proposition}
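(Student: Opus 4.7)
The plan is to first prove that $H$ admits a non-trivial one-dimensional representation, by strong induction on $\dim H$, and then obtain the grouplike $\gs \ne 1$ by applying that result to $H^*$ (which also has dimension $p^aq^b$).

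For the inductive step, I would pass to the Drinfeld double $D(H)$: it is quasitriangular and semisimple (the latter by Radford, since $H$ is semisimple over $\C$) of dimension $(\dim H)^2 = p^{2a}q^{2b}$ with $2a+2b>0$, so Corollary~\ref{max} supplies a non-trivial one-dimensional representation $\phi\colon D(H)\to\C$. Using the decomposition $D(H)=H^{*cop}\bowtie H$, set $\chi:=\phi|_H\in G(H^*)$ and $g:=\phi|_{H^{*cop}}\in G(H)$ (the second because $H^{*cop}$ and $H^*$ share the same algebra structure, so an algebra map $H^{*cop}\to\C$ is a grouplike of $H^{**}=H$); since $D(H)$ is generated as an algebra by $H\cup H^{*cop}$ and $\phi\ne\gep$, the pair $(\chi,g)$ is not $(1,1)$. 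If $\chi\ne\gep$ then $\chi$ itself is the desired non-trivial one-dimensional representation of $H$.

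In the remaining case $\chi=\gep$ and $g\ne 1$, I would compare $\phi\bigl((1\bowtie h)(p\bowtie 1)\bigr)$ with $\phi\bigl((p\bowtie 1)(1\bowtie h)\bigr)$ using the Drinfeld double cross-relation, producing the identity $\sum S\minus(h_2)\,g\,h_1=\gep(h)\,g$ for every $h\in H$. Applying $S$ converts this to $\sum S(h_1)\,S(g)\,h_2=\gep(h)\,S(g)$, which is precisely the condition that $S(g)$ is invariant under the right adjoint action of $H$ on itself; by the standard identification of right-adjoint invariants with the center, $S(g)\in Z(H)$ and hence $g\in Z(H)$. Thus $\C\langle g\rangle$ is a central Hopf subalgebra of $H$, and $\bar H=H/(\C\langle g\rangle)^+H$ is a genuine Hopf algebra of strictly smaller dimension $p^{a'}q^{b'}$ dividing $\dim H$. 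If $\dim\bar H=1$ then $H=\C\langle g\rangle$ is a cyclic group algebra and admits obvious non-trivial one-dimensional representations; otherwise $a'+b'>0$ and the induction hypothesis applied to $\bar H$ yields one, which pulls back along $H\twoheadrightarrow\bar H$ to a non-trivial one-dimensional representation of $H$.

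Finally, applying the conclusion just obtained to $H^*$ produces a non-trivial one-dimensional representation of $H^*$, which is precisely a non-trivial grouplike $\gs\in G(H^{**})=G(H)$. The main obstacle I expect is the centrality argument in the second case: only once $g$ is known to lie in $Z(H)$ does $\C\langle g\rangle$ give rise to a bona fide Hopf algebra quotient $\bar H$ on which the induction can proceed, and establishing this centrality requires carefully unpacking multiplicativity of $\phi$ through the commutation relation in the Drinfeld double and recognizing the resulting identity as a right-adjoint-invariance condition.
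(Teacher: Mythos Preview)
Your proposal is correct and follows essentially the same route as the paper: pass to $D(H)$, invoke Corollary~\ref{max} to obtain a nontrivial one-dimensional representation, split it into a pair $(g,\chi)\in G(H)\times G(H^*)$, and in the degenerate case use centrality of the nontrivial component to form a proper Hopf quotient and induct. The only differences are cosmetic: the paper cites \cite{dr,ra2} for the centrality of $g$ when $\chi=\gep$ whereas you derive it directly from the double's cross-relation, and the paper runs the induction on both conclusions simultaneously while you prove the one-dimensional representation first and then dualize.
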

\begin{proof}
If $a+b=1$ then $H$ is a group algebra of prime order and we are done. Otherwise, consider the factorizable Hopf algebra $D(H).$ By Corollary \ref{max} it has a non-trivial $1$-dimensional representation, hence $D(H)^*$ has a non-trivial grouplike element, which  by \cite{ra2},  has the form $\gs\ot\tau,$ where $\gs\in G(H),\;\tau\in G(H^*).$ If $\tau\ne\gep$ and $\gs\ne 1$ then we are done.

Assume now $\tau=\gep.$ By \cite{dr,ra2},
$\gs\ne 1$ is a central grouplike element of $H.$  Let $N$ be the central Hopf subalgebra of $H$ generated by $\gs.$  Then by the induction hypothesis  $H/HN^+$ has a $1$-dimensinal representation hence so does $H.$  The same proof, replacing $H$ with $H^*,$ works when $\gs=1.$

\end{proof}

These culminate in:
\begin{theorem}\label{sub}
Let $H$ be a semisimple Hopf algebra over $\C$ of dimension $p^aq^b,\,a+b>  0,\;p,q$ prime numbers. Then either $H$ is a group algebra of prime order, or  $H$ contains a normal left  coideal subalgebra $N$ so that $\C\subsetneqq N\subsetneqq H.$
\end{theorem}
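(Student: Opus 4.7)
The plan is to split into two cases based on $a+b$. In the base case $a+b=1$, $\dim H$ is the prime $p$ or $q$, and by Zhu's theorem on semisimple Hopf algebras of prime dimension over $\C$, $H$ is forced to be the group algebra of a cyclic group of that prime order, placing us in the first alternative.

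Assume now $a+b>1$. I would invoke Proposition \ref{gl} to obtain a nontrivial $1$-dimensional representation $V_\tau$ of $H$, equivalently a grouplike $\tau\in G(H^*)$ with $\tau\ne\gep$. Let $K=k\langle\tau\rangle\subseteq H^*$ be the Hopf subalgebra it generates; then $\dim K=|\tau|\ge 2$. My candidate for the required $N$ is the left kernel $N:={\rm LKer}_{V_\tau}$, which by Lemma \ref{twistl}(i),(ii) specialized at $\omega=1$ is a normal left coideal subalgebra of $H$, and by \eqref{kepsilon} satisfies $N\subsetneq H$ since $V_\tau$ is nontrivial. The crux is a dimension count showing $\dim N=\dim H/|\tau|$; I would prove this by identifying $N$ with the space of coinvariants $H^{{\rm co}\,K^*}$ under the surjection $\pi\colon H\twoheadrightarrow K^*$ dual to the inclusion $K\hookrightarrow H^*$, using that the defining condition $h\leftharpoonup\tau=h$ propagates, via the right $H^*$-module structure and the fact that $\tau$ generates $K$, to $h\leftharpoonup k=\gep(k)h$ for all $k\in K$, and then invoking the Nichols--Zoeller freeness of $H^*$ over $K$ to conclude $\dim H=\dim N\cdot\dim K$.

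Given this formula, if $|\tau|<\dim H$ then $1<\dim N<\dim H$ and $N$ is the desired proper nontrivial normal left coideal subalgebra. Otherwise $|\tau|=\dim H$ forces $K=H^*$, so that $H^*$ is generated by a single grouplike and $H^*\cong k\Z_n$ with $n=\dim H$; then $H$ is both commutative and cocommutative, and Pontryagin duality yields $H\cong k\Z_n$ as Hopf algebras. Since $a+b>1$ makes $n$ composite, $\Z_n$ admits a proper nontrivial (automatically normal) subgroup, which yields a proper nontrivial normal Hopf subalgebra of $H$, again supplying the required $N$. The main obstacle I anticipate is precisely the dimension equality $\dim N=\dim H/|\tau|$ together with the identification of $N$ with the coinvariants $H^{{\rm co}\,K^*}$; once these are in place, the remaining steps are a direct appeal to Nichols--Zoeller, Zhu's theorem, and elementary group theory for cyclic groups.
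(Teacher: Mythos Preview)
Your proof is correct and follows the same strategy as the paper's: invoke Proposition~\ref{gl} for a nontrivial one-dimensional representation, take $N={\rm LKer}_{V_\tau}$, and in the degenerate case where $N=\C$ reduce to $H$ being a cyclic group algebra. The only differences are cosmetic: the paper appeals to Corollary~\ref{1dim} in place of your dimension formula $\dim N=\dim H/|\tau|$, and it absorbs the $a+b=1$ case into the faithful-character branch rather than invoking Zhu's theorem separately.
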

\begin{proof}
By Proposition \ref{gl}, $H$ has a non-trivial $1$-dimensional representation $V,$ hence ${\rm LKer_V}\ne H$ is a normal left  coideal subalgebra of $H.$ If $V$ is not faithful then $\C\subsetneqq {\rm LKer_V}$ and we are done. Otherwise, by Corollary \ref{1dim}, $H$ is a  group algebra. If $a+b>  1$ then this group contains a normal subgroup $N,$ hence contains $kN,$ a normal left coideal subalgebra. If $a+b=1$ then $H$ is a group algebra of prime order.
\end{proof}

When $H$ is factorizable more can be said:
\begin{theorem}\label{hopfsub}
Let $H$ is a factorizable semisimple  Hopf algebra over \C  of dimension $p^aq^b,\,a+b>0,$ and let $N$ be a  normal left  coidal subalgebra of $H.$ Then $N$ contains a central grouplike element. In particular, any minimal normal left  coidal subalgebra of $H$ is a central Hopf subalgebra of $H$ of  prime order.
\end{theorem}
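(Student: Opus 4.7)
The plan is to exploit factorizability to transfer the problem from $N\subseteq H$ to a Hopf subalgebra of $H^*$ of matching dimension, and then invoke Proposition~\ref{gl} to extract a nontrivial grouplike element, which I push back through $f_Q$ into $N$.

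Set $B := f_Q\minus(N)$. By Lemma~\ref{subn}(iii), $B$ is a Hopf subalgebra of $H^*$, and by Nichols--Zoeller its dimension divides $\dim H^*=p^aq^b$, so $\dim B = p^{a'}q^{b'}$. The case $N=\C$ is trivial (take $1$), so assume $N\ne \C$. A direct check gives $f_Q(\gep)=1$, so injectivity of $f_Q$ (factorizability) forces $B\neq \C\gep$, i.e.\ $a'+b'>0$.

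Now apply Proposition~\ref{gl} to the semisimple Hopf algebra $B$ to obtain a grouplike $\gs\in B$ with $\gs\ne \gep$. As $\gs$ is a one-dimensional character of $H$, it lies in $R(H)\cap B$; therefore, by the remark preceding \eqref{sij} that $f_Q$ is an algebra map $R(H)\to Z(H)$ in the factorizable semisimple setting, we get $f_Q(\gs)\in Z(H)\cap N$. The key structural point is that $f_Q(\gs)$ is actually a grouplike element of $H$. I would prove this using the factorization $f_Q=f_R^*\ast f_R$: since $\Delta(\gs)=\gs\ot\gs$, one computes
$$f_Q(\gs)=f_R^*(\gs)\,f_R(\gs),$$
and because $f_R\colon H^{*cop}\to H$ and $f_R^*\colon H^{*op}\to H$ are Hopf algebra maps they send the grouplike $\gs$ to grouplike elements of $H$, whose product is again grouplike. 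Injectivity of $f_Q$ then gives $f_Q(\gs)\ne 1$, proving the first assertion.

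For the ``in particular'' clause, assume $N$ is minimal. Then $g:=f_Q(\gs)$ is a nontrivial central grouplike in $N$, and the central Hopf subalgebra $\C\langle g\rangle\subseteq N$ is itself a normal left coideal subalgebra of $H$; minimality forces $N=\C\langle g\rangle$. The order of $g$ divides $\dim H=p^aq^b$, hence equals $p^iq^j$; if this order were composite, a proper power of $g$ would generate a strictly smaller nontrivial central Hopf subalgebra contained in $N$, contradicting minimality. Thus $N$ has prime order.

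The only nontrivial structural step beyond the lemmas already in the paper is the verification that $f_Q$ carries $G(H^*)$ into $G(H)$; I expect this to be the main obstacle, but it is handled cleanly by the convolution decomposition $f_Q=f_R^*\ast f_R$ together with the Hopf-map property of $f_R$ and $f_R^*$. Everything else is bookkeeping with Lemma~\ref{subn}, Proposition~\ref{gl}, injectivity of $f_Q$, and Nichols--Zoeller.
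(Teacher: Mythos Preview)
Your argument is correct and follows essentially the same route as the paper's own proof: pull $N$ back through $f_Q$ to a Hopf subalgebra of $H^*$ via Lemma~\ref{subn}(iii), apply Proposition~\ref{gl} to obtain a nontrivial grouplike there, and push it forward to a central grouplike element of $N$. You have simply supplied the details the paper leaves implicit (the Nichols--Zoeller divisibility, the $f_Q=f_R^*\ast f_R$ computation showing grouplikes go to grouplikes, and the minimality argument), all of which are handled correctly.
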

\begin{proof}

By Lemma \ref{subn} and Proposition \ref{gl}, the Hopf algebra $f_Q\minus(N)$ contains a non-trivial grouplike element. Since $f_Q$ maps grouplike elements of $H^*$ to central grouplike elements of $H$ we are done. The rest is straightforward.
\end{proof}


\end{document}